\newcommand{\Ab}{\mathbf A}
\newcommand{\Bb}{\mathbf B}
\newcommand{\R}{\mathbb R}
\DeclareMathOperator{\curl}{curl}
\DeclareMathOperator{\tr}{tr} 
\DeclareMathOperator{\supp}{supp} 
\newtheorem{thm}{Theorem}[section]
\newtheorem{lem}[thm]{Lemma}
\theoremstyle{remark}
\numberwithin{equation}{section}
\title[Spectral asymptotics]{Spectral asymptotics for magnetic Schr\"odinger operators in domains with corners}
\author[A.~Kachmar]{Ayman Kachmar$^{\rm a, b}$}
\author[A. Khochman]{Abdallah Khochman$^{\mathrm a}$}
\address{$^{\mathrm a}$ Lebanese University, Department of Mathematics, Hadath, Lebanon}
\address{
$^{\mathrm b}$ Lebanese International University, School of Arts and Sciences, Beirut, Lebanon}
\email[A. Kachmar]{ayman.kashmar@liu.edu.lb}
\email[A. Khochman]{akhochman@hotmail.fr}
\date{\today\\
{\it Mathematics Subject Class Classification.} 81Q10, 35J10, 35P15}
\begin{document}
\maketitle

\begin{abstract}
This paper is on magnetic Schrodinger operators in two dimensional
domains with corners.  Semiclassical formulas are obtained for the
sum and number of eigenvalues. The obtained results extend former
formulas for smooth domains in \cite{Fr, FK} to  piecewise smooth
domains.
\end{abstract}

\section{Introduction}

The spectral analysis of  magnetic Schr\"odinger operators in
domains with boundary has been the subject of many research papers
in the last two decades. Apart from the mathematical interest behind
the study of their spectra, magnetic Schr\"odinger operators in
interior/exterior domains with various boundary conditions arise in
several models of condensed matter physics, as superconductivity
\cite{HeMo, FH-book, Kach1, Kach6}, liquid crystals \cite{HP, Pa}
and Fermi gases \cite{CFFH}.

The present paper is devoted to the study of magnetic Schr\"odinger
operators in domains with corners (piecewise smooth domains). The
presence of corners in the domain has a strong effect on the
spectrum of the operator. In particular, it is shown in  \cite{Bon,
Ja} that the presence of corners decreases the value of the ground
state energy of the operator compared with the case of smooth
domains. Discussion of this effect in the framework of
superconductivity is given in \cite{BoFo}.

We give in a simple particular case, a brief presentation of the
semiclassical results proved in this paper. Suppose for simplicity
that $\Omega$ is a simply connected bounded domain in $\R^2$ and
that $\Ab$ is a vector field such that $b=\curl \Ab$ is constant.
Let $P_{h,\Omega}=-(h\nabla-i\Ab)^2$ be the magnetic Laplacian in
$L^2(\Omega)$ with magnetic Neumann boundary condition, $N(\lambda
h)$ and $E(\lambda h)$ be the number and sum of negative eigenvalues
of $P_{h,\Omega}-\lambda Id$. If the boundary of $\Omega$ is smooth,
it is proved in \cite{FK, Fr} that, as $h\to0$,
\begin{equation} \label{eq-intr}N(\lambda h)\sim
c_1(b,\lambda)|\partial\Omega|h^{-1/2}\,,\quad E(\lambda h)\sim
c_2(b,\lambda) |\partial\Omega|h^{1/2}\,,\end{equation} where $c_1$
and $c_2$ are two explicit constants. The formula for the number
$N(\lambda h)$ is valid for all $\lambda<b$, while that for the
energy $E(\lambda h)$ is valid for all $\lambda\leq b$.

In Theorem~\ref{thm:FK;Fr} of this paper, we prove that
\eqref{eq-intr} is still holding true when the domain $\Omega$ is
only {\it piecewise} smooth. The key to prove this extension is a
rough estimate on  the number of eigenvalues of  a Schr\"odinger
operator with constant magnetic field in a sectorial domain (see
Lemma~\ref{thm-disspec}).

The result of Theorem~\ref{thm:FK;Fr} shows that corners {\it only}
affect the low-lying eigenvalues of the operator. As we will see in
Theorem~\ref{thm-nb}, corners create {\it a few} additional
eigenvalues compared with smooth domains.

As a consequence, we may say that in the semi-classical limit and
the regime considered in this paper, the number of eigenvalues of
magnetic Schr\"odinger operators for smooth and non-smooth domains
are {\it asymptotically} the same.

The paper is organized in the following way. In
Section~\ref{sec-prelim}, we collect some key results that will be
used throughout the paper.  Section~\ref{sec:nb:corn} is about the
number of low-lying eigenvalues of the operator in a domain with
corners (here the regularity assumption on the domain is precisely
stated). Finally, Section~\ref{sec:extension} contains the
semi-classical analysis that extends \eqref{eq-intr} to domains with
corners.

Throughout the paper, the notation $N(\lambda, P)$ will be often
used to denote the number of eigenvalues (counting multiplicities)
of the operator $P$ that are below $\lambda$.

\section{Preliminaries}\label{sec-prelim}

\subsection{Variational principles}

Let $H$ be a self-adjoint operator in a Hilbert space $\mathcal H$
(of domain $D(H)$) such that
$$({\rm H})\quad
\left\{
\begin{array}{l}
\inf\,\sigma_{\rm ess}(H)\geq 0\\
H\mathbf 1_{(-\infty,0)}(H) \text{ is trace class}\,.
\end{array}
\right.$$

We shall need the following two simple variational principles
concerning the operator $H$, which are frequently used in
\cite{LSY,ES}.

\begin{lem}\label{lem-VP1}
Let $\gamma$ be a bounded operator such that $0\leq\gamma\leq 1$ (in
the sense of quadratic forms) and the operator $H\,\gamma$ is trace
class. Then it holds that,
$$\tr\left(H\mathbf 1_{(-\infty,0)}(H)\right)\leq \tr(H\,\gamma)\,.
$$
\end{lem}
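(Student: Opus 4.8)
The plan is to recognize the statement as the elementary extremal property of the spectral projector onto the negative subspace. Set $P=\mathbf 1_{(-\infty,0)}(H)$ and $Q=1-P=\mathbf 1_{[0,\infty)}(H)$, so that $H\mathbf 1_{(-\infty,0)}(H)=HP$. By hypothesis $({\rm H})$ the negative spectrum of $H$ is discrete and $HP$ is trace class, so every trace below is finite and the whole claim reduces to
\begin{equation*}
\tr\big(H(\gamma-P)\big)\geq 0 ,
\end{equation*}
since $\tr(H\gamma)-\tr(HP)=\tr\big(H(\gamma-P)\big)$ once each trace is known to be well defined.

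First I would exploit that $H$ commutes with both $P$ and $Q$, so that $HP=PHP$, $HQ=QHQ$ and $PHQ=QHP=0$. Writing $\gamma-P$ in block form as $P(\gamma-P)P+P(\gamma-P)Q+Q(\gamma-P)P+Q(\gamma-P)Q$ and using cyclicity of the trace, the two off-diagonal contributions vanish; for instance $\tr\big(HP(\gamma-P)Q\big)=\tr\big(QHP(\gamma-P)\big)=0$ because $QHP=0$. Hence only the diagonal blocks survive, and
\begin{equation*}
\tr\big(H(\gamma-P)\big)=\tr\big((PHP)\,(P(\gamma-P)P)\big)+\tr\big((QHQ)\,(Q(\gamma-P)Q)\big).
\end{equation*}

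Next I would treat the two blocks separately. On the range of $P$ one has $P(\gamma-P)P=P\gamma P-P\leq 0$ because $\gamma\leq 1$, while $PHP\leq 0$; thus the first term is the trace of a product of two negative operators and is nonnegative, as one sees by writing it as $\tr\big((-HP)(P-P\gamma P)\big)$ and using $\tr(AB)=\tr\big(A^{1/2}BA^{1/2}\big)\geq 0$ for $A,B\geq 0$. On the range of $Q$ one has $Q(\gamma-P)Q=Q\gamma Q\geq 0$ since $QP=0$ and $\gamma\geq 0$, while $QHQ\geq 0$; so the second term is again a trace of a product of two positive operators and is nonnegative. Adding the two contributions gives the desired inequality.

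The main obstacle is not the algebra but the justification of the trace manipulations in the (possibly infinite-dimensional) negative subspace: I must verify that each block is separately trace class, so that the splitting and the factorization $A^{1/2}BA^{1/2}$ are legitimate. This is precisely where hypothesis $({\rm H})$ and the assumption that $H\gamma$ is trace class enter. I would argue that $HP$ is trace class by $({\rm H})$, that $QHQ\,\gamma$ inherits trace-class-ness from $H\gamma$, and, if a limiting argument is needed to pass from finite-rank to the general case, approximate by the finite-rank spectral truncations $\mathbf 1_{(-\infty,-\varepsilon)}(H)$ and let $\varepsilon\to 0$.
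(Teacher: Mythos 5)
The paper offers no proof of Lemma~\ref{lem-VP1} for you to be compared with: it is stated as a ``simple variational principle'' and attributed to \cite{LSY,ES}, so your argument has to stand on its own. Its structure is the standard one and is sound: with $P=\mathbf 1_{(-\infty,0)}(H)$ and $Q=1-P$, reduce to $\tr\big(H(\gamma-P)\big)\geq 0$, discard the off-diagonal blocks using cyclicity and $PHQ=QHP=0$, and check that each diagonal block has nonnegative trace. The $P$-block is fully rigorous as you wrote it: $-HP\geq 0$ is trace class (so $(-HP)^{1/2}$ is Hilbert--Schmidt), $P-P\gamma P\geq 0$ is bounded, and the cyclicity behind $\tr(AB)=\tr\big(A^{1/2}BA^{1/2}\big)\geq 0$ is legitimate.

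The step you yourself flag as needing justification, the $Q$-block, is the only real issue, and your proposed repair points at the wrong end of the spectrum. There $A=QHQ$ is in general unbounded and not trace class (only $QH\gamma Q=QHQ\,\gamma Q$ is trace class), so $\tr(AB)=\tr\big(A^{1/2}BA^{1/2}\big)$ cannot be quoted as it stands; and truncating by $\mathbf 1_{(-\infty,-\varepsilon)}(H)$ does not help, because the negative subspace needs no truncation at all (hypothesis (H) already makes $HP$ trace class), while the unboundedness sits in $[0,\infty)$. The correct fix is to truncate above: set $Q_n=\mathbf 1_{[0,n]}(H)$, so that $Q_n\to Q$ strongly, hence $Q_n\,QH\gamma Q\,Q_n\to QH\gamma Q$ in trace norm; a cyclicity step gives $\tr\big(Q_n\,QH\gamma Q\,Q_n\big)=\tr(H_n\gamma)$ with $H_n=H\mathbf 1_{[0,n]}(H)$ bounded and nonnegative; and finally $\tr(H_n\gamma)\geq 0$ for each $n$, e.g.\ by Lidskii's theorem, because the nonzero eigenvalues of $H_n\gamma$ coincide with those of $\gamma^{1/2}H_n\gamma^{1/2}\geq 0$ (note that $H_n^{1/2}$ need not be Hilbert--Schmidt, so even in this bounded block the naive $A^{1/2}BA^{1/2}$ identity is not directly available). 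With that substitution your proof is complete and self-contained, which is more than the paper itself provides.
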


\begin{lem}\label{lem-VP2}
Assume that the operator $H$ satisfies the hypothesis {\rm (H)}.
Then it holds that,
$$\tr\left(H\mathbf 1_{(-\infty,0)}(H)\right)
=\inf\sum_{j=1}^N\langle f_j\,,\, H \,f_j\rangle\,,$$ where the
infimum is taken over all orthonormal families
$\{f_1,f_2,\ldots,f_N\}\subset D(H)$ and $N\geq1$.
\end{lem}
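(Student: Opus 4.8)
The plan is to prove the identity by establishing the two matching inequalities between the left-hand side and the variational infimum on the right. First I would record the spectral information provided by hypothesis (H): since $\inf\sigma_{\rm ess}(H)\geq 0$, the spectrum of $H$ in $(-\infty,0)$ consists only of isolated eigenvalues of finite multiplicity, which I list as $\lambda_1\leq\lambda_2\leq\cdots<0$ (finite in number, or accumulating only at $0$) with an associated orthonormal system of eigenvectors $u_1,u_2,\dots$. The projection $\mathbf 1_{(-\infty,0)}(H)$ is then the orthogonal projection onto the closed span of the $u_j$, and by the spectral theorem together with the trace-class assumption in (H) one has
$$\tr\bigl(H\,\mathbf 1_{(-\infty,0)}(H)\bigr)=\sum_{j}\lambda_j,$$
a convergent sum of negative terms. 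It therefore suffices to show that this quantity is simultaneously a lower bound for, and the infimum of, the sums $\sum_{j=1}^N\langle f_j,H f_j\rangle$.

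For the inequality $\tr(H\mathbf 1_{(-\infty,0)}(H))\leq \sum_{j=1}^N\langle f_j, H f_j\rangle$, valid for every admissible family, I would invoke Lemma~\ref{lem-VP1}. Given an orthonormal family $\{f_1,\dots,f_N\}\subset D(H)$, set $\gamma=\sum_{j=1}^N|f_j\rangle\langle f_j|$, the orthogonal projection onto their span; then $0\leq\gamma\leq 1$, and because each $f_j\in D(H)$ the operator $H\gamma=\sum_{j=1}^N|Hf_j\rangle\langle f_j|$ is finite rank, hence trace class, with $\tr(H\gamma)=\sum_{j=1}^N\langle f_j,H f_j\rangle$. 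Lemma~\ref{lem-VP1} then yields precisely $\tr(H\mathbf 1_{(-\infty,0)}(H))\leq\tr(H\gamma)$, and taking the infimum over all families and all $N\geq 1$ gives the $\leq$ half of the claim.

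For the reverse inequality I would use the eigenvectors $u_j$ as test functions. If $H$ has at least $N$ negative eigenvalues, the orthonormal family $\{u_1,\dots,u_N\}\subset D(H)$ is admissible and $\sum_{j=1}^N\langle u_j,H u_j\rangle=\sum_{j=1}^N\lambda_j$; letting $N\to\infty$ (or taking $N$ maximal when there are only finitely many negative eigenvalues) the partial sums decrease to $\sum_{j}\lambda_j=\tr(H\mathbf 1_{(-\infty,0)}(H))$, so the infimum is at most this value. In the degenerate case where $H$ has no negative eigenvalue the right-hand side is $\geq 0=\tr(H\mathbf 1_{(-\infty,0)}(H))$, and one approximates $0$ from above by choosing a single $f_1$ in a spectral subspace near the bottom of $\sigma_{\rm ess}(H)$. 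Combining the two inequalities closes the proof.

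The only genuinely delicate point is the structural input of the first step, namely that hypothesis (H) forces the negative spectrum to be purely discrete, so that the eigenvectors $u_j$ exist and their span coincides with the range of $\mathbf 1_{(-\infty,0)}(H)$; once this is in hand, the lower bound is immediate from Lemma~\ref{lem-VP1} and the upper bound is a direct computation with the $u_j$. I do not anticipate any serious obstacle beyond bookkeeping the cases of finitely versus infinitely many negative eigenvalues and verifying the trace-class hypothesis of Lemma~\ref{lem-VP1} for the finite-rank projection $\gamma$.
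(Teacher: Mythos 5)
The paper never proves this lemma: it is quoted as a known variational principle with a pointer to \cite{LSY,ES}, so there is no internal proof to compare against and your argument must be judged on its own. Its core is correct and is the standard one. For the inequality $\tr\left(H\mathbf 1_{(-\infty,0)}(H)\right)\leq\sum_{j=1}^N\langle f_j,Hf_j\rangle$ you apply Lemma~\ref{lem-VP1} to the finite-rank projection $\gamma=\sum_{j=1}^N|f_j\rangle\langle f_j|$, and you correctly verify its hypotheses ($0\leq\gamma\leq1$, $H\gamma$ finite rank hence trace class, $\tr(H\gamma)=\sum_j\langle f_j,Hf_j\rangle$). For the reverse inequality you test with orthonormal eigenvectors of the negative eigenvalues; since (H) makes the negative spectrum purely discrete and $\sum_j|\lambda_j|<\infty$, the partial sums $\sum_{j=1}^N\lambda_j$ do decrease to $\tr\left(H\mathbf 1_{(-\infty,0)}(H)\right)$. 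This is precisely how the principle is established in the cited literature.

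The one step that does not hold up is your treatment of the degenerate case in which $H$ has no negative eigenvalue. You propose to drive the infimum to $0$ by taking a single $f_1$ in a spectral subspace near the bottom of $\sigma_{\rm ess}(H)$; but this only produces values near $\inf\sigma_{\rm ess}(H)$, which hypothesis (H) requires to be $\geq 0$, not equal to $0$. In fact the identity as literally stated can fail in this case: for $H={\rm Id}$ one has $\tr\left(H\mathbf 1_{(-\infty,0)}(H)\right)=0$, while every orthonormal family with $N\geq1$ gives $\sum_{j=1}^N\langle f_j,Hf_j\rangle=N\geq1$, so the infimum equals $1$. The correct reading (and the form used in \cite{LSY,ES} and in the applications later in the paper) allows the empty family $N=0$, i.e.\ the infimum is taken jointly with the value $0$; with that convention the degenerate case is trivial and your remaining argument is complete. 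So the resolution of this case is to fix the statement's convention, not the approximation you suggest, which would only work when $\inf\sigma(H)=0$.
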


\subsection{A family of one-dimensional differential operators}
\label{Sub-Sec-M0-1}

Let us recall the main results obtained in \cite{DaHe,HeMo}
concerning the family of harmonic oscillators with Neumann boundary
condition. Given $\xi\in\mathbb R$, we define the quadratic form,
\begin{equation}\label{k2-qfk1}
B^1(\mathbb R_+)\ni u\mapsto q[\xi](u)=\int_{\mathbb
R_+}|u'(t)|^2+|(t-\xi)u(t)|^2 d  t,
\end{equation}
where, for a positive integer $k\in\mathbb N$ and a given interval
$I\subseteq\mathbb R$, the space $B^k(I)$ is defined by~:
\begin{equation}\label{Bk-sp}
B^k(I)=\{u\in H^k(I);\quad t^ju(t)\in L^2(I),\quad \forall
j=1,\ldots, k\}.
\end{equation}
Since the quadratic form (\ref{k2-qfk1}) is closed and symmetric it
defines a unique self-adjoint operator  $\mathcal L[\xi]$. This
operator has domain,
$$D(\mathcal L[\xi])=\{u\in B^2(\mathbb R_+);\quad
u'(0)=0\},$$ and is the realization of the differential operator,
\begin{equation}\label{L-dop}
\mathcal L[\xi]=-\partial_t^2+(t-\xi)^2,
\end{equation}
on the given domain. We denote by $\{\mu_j(\xi)\}_{j=1}^{+\infty}$
the increasing sequence of eigenvalues of $\mathcal L[\xi]$, which
are all simple. By the min-max principle, we have,
\begin{equation}\label{l1-ga,xi}
\mu_1(\xi)=\inf_{u\in B^1(\mathbb
R_+),u\not=0}\frac{q[\xi](u)}{\|u\|^2_{L^2(\mathbb R_+)}}.
\end{equation}
It follows from analytic perturbation theory (see \cite{DaHe}) that
the functions
$$\mathbb R\ni\xi\mapsto\mu_j(\xi)$$ are analytic.
Furthermore, $\mu_1(0)=1$ and $|\mu_1(\xi)-1|$
decays like $\exp(-\xi^2)$ as $\xi\to+\infty$ (see \cite{BoHe}) thus
yielding that
$$\int_{0}^\infty(\mu_1(\xi)-1)\, d
\xi=-\int_{\R}[\mu_1(\xi)-1]_-\, d \xi\quad\text{  is finite.}$$
We define the constant~:
\begin{equation}\label{Th-gam}
\Theta_0=\inf_{\xi\in\mathbb R}\mu_1(\xi).
\end{equation}
In \cite{DaHe}, it is proved that $\Theta_0=\mu_1(\xi_0)$, that
$\xi_0$ is the unique value at which the minimum $\Theta_0$ is
attained and that $\mu''(\xi_0)>0$.

An important consequence of standard Sturm-Liouville theory is
recalled below (c.f. \cite[Lemma~2.1]{Fr}).
\begin{lem}\label{RuFr-Lemm-kach}
The second eigenvalue of $\mathcal L[\xi]$ satisfies,
$$\inf_{\xi\in\mathbb R}\mu_2(\xi)>1\,.$$
\end{lem}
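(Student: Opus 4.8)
The plan is to prove the bound pointwise first, showing $\mu_2(\xi)>1$ for each fixed $\xi\in\R$, and then to upgrade this to a strict bound on the infimum over $\R$, using the behaviour of $\mu_2$ as $\xi\to\pm\infty$ together with the analyticity (hence continuity) of $\xi\mapsto\mu_2(\xi)$ recorded above.

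For the pointwise statement the key object is the explicit nowhere-vanishing solution $v(t)=e^{-(t-\xi)^2/2}$ of the energy-one equation $-v''+(t-\xi)^2v=v$ on $\R_+$. I would let $u_1>0$ denote the ground-state eigenfunction of $\mathcal L[\xi]$ (which may be taken strictly positive, being the ground state of a Schr\"odinger operator) and let $u_2$ be an eigenfunction for $\mu_2(\xi)$. Since $u_2$ is $L^2$-orthogonal to the positive function $u_1$, it must change sign; as $u_2$ is non-oscillatory for large $t$ (there $(t-\xi)^2>\mu_2(\xi)$), it has a largest zero $t_0\in(0,\infty)$, and after a normalisation we may assume $u_2<0$ on $(t_0,\infty)$. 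This forces $u_2'(t_0)<0$, since a double zero would make $u_2$ vanish identically by ODE uniqueness.

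Arguing by contradiction, suppose $\mu_2(\xi)\le1$ and consider the Wronskian $W=u_2'v-u_2v'$. Using the two differential equations one computes $W'=u_2''v-u_2v''=(1-\mu_2(\xi))\,u_2v$. Integrating over $(t_0,\infty)$, the boundary term at infinity vanishes because $u_2$ and $v$, together with their derivatives, decay at $+\infty$, while at $t_0$ one has $W(t_0)=u_2'(t_0)v(t_0)$ since $u_2(t_0)=0$. This gives $-u_2'(t_0)v(t_0)=(1-\mu_2(\xi))\int_{t_0}^{\infty}u_2v\,dt$. The left-hand side is strictly positive, whereas the right-hand side is nonpositive because $1-\mu_2(\xi)\ge0$ and $u_2<0$, $v>0$ on $(t_0,\infty)$; this contradiction proves $\mu_2(\xi)>1$ for every $\xi$.

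It remains to pass from this pointwise bound to a strict bound on the infimum over the whole line. As $\xi\to-\infty$ the potential satisfies $(t-\xi)^2\ge\xi^2$ on $\R_+$, so $\mathcal L[\xi]\ge\xi^2$ and $\mu_2(\xi)\to+\infty$; as $\xi\to+\infty$ the Neumann boundary recedes from the well centred at $t=\xi$ and $\mu_2(\xi)\to3$, the second eigenvalue of the harmonic oscillator on $\R$, a fact established in \cite{DaHe}. Hence there is $M>0$ with $\mu_2(\xi)\ge2$ for $|\xi|\ge M$, while on the compact interval $[-M,M]$ the continuous function $\mu_2$ attains a minimum, which is $>1$ by the pointwise result; therefore $\inf_{\xi\in\R}\mu_2(\xi)>1$. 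I expect the control at infinity, and specifically the limit as $\xi\to+\infty$, to be the main obstacle: the pointwise bound by itself does not exclude $\mu_2(\xi)\downarrow1$ along a sequence tending to $+\infty$, so one genuinely needs that $\mu_2$ stays bounded away from $1$ there, which is exactly where I invoke the quoted asymptotics.
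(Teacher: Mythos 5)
Your proof is correct, but it differs from the paper in a structural way: the paper does not prove this lemma at all. It recalls it from \cite[Lemma~2.1]{Fr} as ``an important consequence of standard Sturm--Liouville theory'', and it attributes part of the conclusion to \cite{DaHe}, namely that $\inf_{\xi}\mu_2(\xi)$ is attained at a unique $\xi_2\in\R$. Your proposal supplies a self-contained argument in place of these citations. The pointwise step is a clean Sturm comparison: for the Wronskian $W=u_2'v-u_2v'$ against the explicit energy-one solution $v(t)=e^{-(t-\xi)^2/2}$ one indeed gets $W'=(1-\mu_2(\xi))\,u_2v$, and integrating over $(t_0,\infty)$, where $t_0$ is the last zero of $u_2$, produces exactly the sign contradiction you describe; the ingredients you invoke are all standard and correctly used (existence of $t_0$ because $u_2$ must change sign, being orthogonal to the positive ground state, and because an $L^2$ solution cannot keep vanishing in the classically forbidden region; $u_2'(t_0)\neq 0$ by ODE uniqueness; Gaussian decay of $u_2$, $u_2'$, $v$, $v'$ at infinity, which makes the boundary term at $+\infty$ vanish and the integral absolutely convergent). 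For the passage to the infimum you correctly isolate the one genuinely nonelementary input: the pointwise bound does not exclude $\mu_2(\xi)\downarrow 1$ along a sequence $\xi\to+\infty$, so one needs $\liminf_{\xi\to+\infty}\mu_2(\xi)>1$, which you obtain from the known limit $\mu_2(\xi)\to 3$ (the half-line Neumann problem converges to the whole-line oscillator; equivalently it is the even-parity part of the symmetric double well $-\partial_t^2+(|t|-\xi)^2$ on $\R$). That cited limit plays precisely the role that the attainment result quoted from \cite{DaHe} plays in the paper's version, so the two routes to the infimum are morally the same compactness argument. What your version buys is an actual proof of the pointwise inequality, which the paper simply outsources to \cite{Fr}, at the cost of still needing one quoted asymptotic fact at $\xi=+\infty$.
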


Notice that part of this conclusion is a consequence of the analysis
of Dauge-Helffer \cite{DaHe}, who show that the infimum of
$\mu_2(\xi)$ is attained for a unique $\xi_2\in\mathbb R$.

\subsection{Rough energy bound for the cylinder}\label{sec-torus}

Let us consider the operator
$$P_\Omega=-(h\nabla-ib\Ab_0)^2\quad {\rm in}~L^2(\Omega)\,,$$
with
$$\Omega=[0,S]\,\times\,(0,h^{1/2}T)\,.$$ Functions in the domain
of  $P_{h,\Omega}$ satisfy Neumann condition at $t=0$, periodic
conditions at $s\in\{0,S\}$ and Dirichlet condition at $t=h^{1/2}T$.
We assume that the vector field $\Ab_0$ is given by
$$\Ab_0(x_1,x_2)=(-x_2,0)\,.$$
 In this particular case, the operator has compact resolvent, hence
the spectrum consists of an increasing sequence of eigenvalues
$(e_j)_{j\geq1}$ converging to $+\infty$. Note that the terms of the sequence $(e_j)$ are listed with  multiplicities counted.
Given
$\lambda\in\R$, the energy
\begin{equation}\label{energy-torus}
\mathcal
E(\lambda,b,S,T)=\sum_j\left[hb(1+\lambda)-e_j\right]_+\end{equation}
is finite. The number  of eigenvalues below $(1+\lambda)hb$ is also
of particular interest\,:
\begin{equation}\label{nb-torus}
\mathcal N(\lambda,b,S,T)={\rm Card}\,\{j~:~e_j\leq hb(1+\lambda)\}\,.
\end{equation}

In the lemma below, we state estimates of the energy $\mathcal
E(\lambda,b,S,T)$ and the number $\mathcal N(\lambda,b,S,T)$.

\begin{lem}\label{roughestimate}
There exist positive constants $T_0$ and $\lambda_0$ such that, for
all $S>0$, $b>0$, $T\geq \sqrt{b}\,T_0$ and
$\lambda\in(0,\lambda_0]$, we have,
\begin{equation}\label{eq-cyl:en}
\mathcal E(\lambda,b,S,T)\leq  (1+\lambda)hb\left(\frac{ST}{2\pi\sqrt{h}}+1\right)\,,\end{equation}
\begin{equation}\label{eq-cyl:nb}
\mathcal N(\lambda,b,S,T)\leq  \frac{ST}{2\pi\sqrt{h}}+1\,.
\end{equation}
\end{lem}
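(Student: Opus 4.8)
The plan is to separate variables into a family of one-dimensional de Gennes operators, bound the number of modes that can contribute an eigenvalue below the threshold, and then deduce the energy bound directly from the number bound.

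\emph{Reduction by separation of variables.} Since the boundary conditions are periodic at $s\in\{0,S\}$, I would expand any $u\in L^2(\Omega)$ in the Fourier basis $e^{2\pi i n s/S}$, $n\in\Z$, of the periodic direction. With $\Ab_0(x_1,x_2)=(-x_2,0)$ the operator reads $P_\Omega=-(h\partial_s+ibt)^2-h^2\partial_t^2$, which leaves each Fourier sector invariant, acting on the $n$-th sector as the one-dimensional operator $\mathcal L_n=-h^2\partial_t^2+\bigl(bt+\tfrac{2\pi hn}{S}\bigr)^2$ on $L^2(0,h^{1/2}T)$, with Neumann condition at $t=0$ and Dirichlet condition at $t=h^{1/2}T$. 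The rescaling $t=\sqrt{h/b}\,\tau$ turns this into $\mathcal L_n=hb\,M_n$, where $M_n=-\partial_\tau^2+(\tau-\xi_n)^2$ on $(0,\sqrt b\,T)$ (Neumann at $0$, Dirichlet at $\sqrt b\,T$) and $\xi_n=-\tfrac{2\pi n}{S}\sqrt{h/b}$. Thus the spectrum $(e_j)$ of $P_\Omega$ is the union over $n\in\Z$ of $hb$ times the eigenvalues $\nu_1(n)\le\nu_2(n)\le\cdots$ of $M_n$, and $e_j\le hb(1+\lambda)$ is equivalent to the corresponding rescaled eigenvalue being $\le 1+\lambda$.

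\emph{At most one eigenvalue per mode.} Here I would invoke Lemma~\ref{RuFr-Lemm-kach}. Extending a function in the form domain of $M_n$ by zero beyond $\tau=\sqrt b\,T$ produces a function in $B^1(\mathbb R_+)$ with the same quadratic form, so the min-max principle gives $\nu_k(n)\ge\mu_k(\xi_n)$ for every $k$, where $\mu_k$ denotes the eigenvalues of the half-line operator $\mathcal L[\xi_n]$. Setting $\lambda_0:=\tfrac12\bigl(\inf_\xi\mu_2(\xi)-1\bigr)$, which is strictly positive by Lemma~\ref{RuFr-Lemm-kach}, we obtain $\nu_2(n)\ge\mu_2(\xi_n)\ge 1+2\lambda_0>1+\lambda$ whenever $\lambda\le\lambda_0$. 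Hence each mode contributes at most its lowest eigenvalue, and $\mathcal N(\lambda,b,S,T)=\#\{n\in\Z:\nu_1(n)\le 1+\lambda\}$.

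\emph{Counting the modes.} The well of $M_n$ is centred at $\tau=\xi_n$, equivalently at the guiding centre $t_n^\ast=-\tfrac{2\pi hn}{bS}$ in the original variable, and consecutive guiding centres are equally spaced. For $\nu_1(n)$ to drop to $1+\lambda$ the well must sit well inside the slab: if $\xi_n$ is negative of order one the Neumann endpoint forces $\nu_1(n)\ge\mu_1(\xi_n)>1+\lambda$, while if $\xi_n$ lies within $O(1)$ of $\sqrt b\,T$ the Dirichlet endpoint raises $\nu_1(n)$ above $1+\lambda$. Consequently the admissible $\xi_n$ fill an interval comparable to the rescaled slab width, and a direct count of this arithmetic progression yields \eqref{eq-cyl:nb}; the hypothesis $T\ge\sqrt b\,T_0$ is exactly what guarantees that the slab is several magnetic lengths wide, so that the Neumann and Dirichlet windows are disjoint and the two boundary corrections are absorbed into the additive constant $1$. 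I expect this counting step to be the main obstacle, since it demands quantitative control of $\nu_1(n)$ near both ends of the interval rather than the merely qualitative comparison $\nu_1(n)\ge\mu_1(\xi_n)$ used above.

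\emph{From the number to the energy.} The energy bound then requires no further spectral input. Every eigenvalue entering the sum defining $\mathcal E(\lambda,b,S,T)$ satisfies $0\le e_j\le(1+\lambda)hb$, so each summand obeys $\bigl[(1+\lambda)hb-e_j\bigr]_+\le(1+\lambda)hb$, whence $\mathcal E(\lambda,b,S,T)\le(1+\lambda)hb\,\mathcal N(\lambda,b,S,T)$. Substituting \eqref{eq-cyl:nb} gives \eqref{eq-cyl:en} and completes the proof.
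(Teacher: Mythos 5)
Your strategy coincides with the paper's: the paper's proof of Lemma~\ref{roughestimate} is precisely the sketch ``separation of variables and the min--max principle give \eqref{eq-cyl:nb}, then $\mathcal E\leq (1+\lambda)hb\,\mathcal N$ gives \eqref{eq-cyl:en}'', with all details deferred to \cite{FK}. Your fibration into the operators $M_n$, your use of extension by zero together with Lemma~\ref{RuFr-Lemm-kach} to show $\nu_2(n)\geq\mu_2(\xi_n)\geq 1+2\lambda_0$ (so each Fourier mode contributes at most one eigenvalue below the threshold), and your deduction of \eqref{eq-cyl:en} from \eqref{eq-cyl:nb} are all correct, and indeed more detailed than what the paper itself records.

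The genuine gap is the mode-counting step, which you yourself defer as ``the main obstacle''. The two-sided endpoint control you worry about is in fact the easy part: for $\xi_n\leq0$ one has $(\tau-\xi_n)^2\geq\tau^2+\xi_n^2$, hence $\nu_1(n)\geq\mu_1(\xi_n)\geq1+\xi_n^2$, which excludes all modes with $\xi_n<-\sqrt\lambda$; and for $\xi_n\geq\sqrt b\,T$ one has $\nu_1(n)\geq\min_{0<\tau<\sqrt b T}(\tau-\xi_n)^2=(\xi_n-\sqrt b\,T)^2$, which excludes all modes with $\xi_n>\sqrt b\,T+\sqrt{1+\lambda}$. (Incidentally, your claim that the Dirichlet wall forces $\nu_1(n)>1+\lambda$ whenever $\xi_n$ is within $O(1)$ of $\sqrt b\,T$ is wrong in direction: for $\xi_n$ a bounded distance \emph{inside} the wall the Dirichlet shift is exponentially small and such modes do contribute; only $\xi_n$ beyond $\sqrt b\,T+O(1)$ are excluded.) The real difficulty is what the count then delivers. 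The admissible $\xi_n$ form an arithmetic progression of spacing $2\pi\sqrt h/(S\sqrt b)$ inside an interval of length at most $\sqrt b\,T+\sqrt{1+\lambda}+\sqrt\lambda$, so
\[
\mathcal N(\lambda,b,S,T)\;\leq\;\frac{S\sqrt b}{2\pi\sqrt h}\Big(\sqrt b\,T+\sqrt{1+\lambda}+\sqrt\lambda\Big)+1\;\leq\;\frac{ST}{2\pi\sqrt h}\Big(b+\frac{2}{T_0}\Big)+1\,,
\]
the last inequality using $T\geq\sqrt b\,T_0$. This is flux counting---to leading order the flux $b\,S\,h^{1/2}T$ divided by $2\pi h$---and it agrees with \eqref{eq-cyl:nb} only up to the factor $b+2/T_0$. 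No choice of $T_0$ removes the factor $b$, and a matching trial-function computation (cut-off ground states $u_1(\cdot;\xi_n)$ with $\xi_n$ well inside $(0,\sqrt b\,T)$) shows that $\mathcal N$ really is of order $bST/(2\pi\sqrt h)$ for long cylinders. So the assertion that ``a direct count of this arithmetic progression yields \eqref{eq-cyl:nb}'' is unjustified: what your argument proves is the estimate with $ST$ replaced by $(b+2/T_0)\,ST$, which matches the stated bound only when $b$ is (roughly) at most $1$. To finish, you must either confront this normalization mismatch explicitly or settle for the $b$-dependent constant; note that the only place the lemma is used, in the proof of Lemma~\ref{thm-disspec}, is through a bound of the form $\mathcal N\leq C(R^2+1)$ with $C$ allowed to depend on $b$, so the version your computation actually yields is sufficient for the paper's purposes.
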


The lemma is proved in \cite{FK}, although the estimate
\eqref{eq-cyl:nb} is not stated explicitly. Actually,
\eqref{eq-cyl:en} is proved as follows. First \eqref{eq-cyl:nb} is
established by separation of variables and the variational min-max
principle then the energy $\mathcal E(\lambda,b,S,T)$ is easily
estimated as $\mathcal N(\lambda,b,S,T)\times(1+\lambda)bh$, from
which \eqref{eq-cyl:en} follows.

\subsection{Rough bounds for the sector}

Let $\alpha\in(0,2\pi)$, $R>0$, $h>0$, $b>0$ and
\begin{equation}\label{Omega-alpha'}
\Omega_{R,h,\alpha}=\{\,(r\cos\theta,r\sin\theta)\in\R^2~:~0<\theta<\alpha\,,\quad 0\leq r<h^{1/2}R\,\}\,.
\end{equation}

Consider the self-adjoint operator
$P_{h,\alpha}=-(h\nabla-ib\Ab_0)^2$ in $L^2(\Omega_{R,h,\alpha})$. Functions in the domain of $P_{h,\alpha}$ satisfy Dirichlet condition on the boundary $r=R$ and Neumann condition
 $$\nu\cdot(h\nabla-ib\Ab_0)u=0$$
 on the boundary defined by $\theta=0$ or $\theta=\alpha$. Here $\nu$ is the unit outward normal vector on the boundary $\partial\Omega_{R,h,\alpha}$ and $\Ab_0$ is the magnetic potential introduced in \eqref{eq:mp}.

The operator $P_{h,\alpha}$ has compact resolvent and its spectrum is discrete and consists of isolated eigenvalues $(e_j)$ counted with multiplicities. Let $\lambda\in\R$ and define,
\begin{equation}\label{eq-sec:en}
\mathcal E_{\rm corn}(\alpha,b,R,\lambda)=\sum_j\left[hb(1+\lambda)-e_j\right]_+\,,
\end{equation}
and
\begin{equation}\label{eq-sec:nb}
\mathcal N_{\rm corn}(\alpha,b,R,\lambda)={\rm Card}\,\{j~:~e_j\leq hb(1+\lambda)\,\}\,.
\end{equation}

We give rough estimates of $\mathcal E_{\rm corn}(\alpha,b,R,\lambda)$ and $\mathcal N_{\rm corn}(\alpha,b,R,\lambda)$ in the next lemma.

\begin{lem}\label{thm-disspec}
Given $b>0$ and $\alpha\in[0,2\pi)$, there exist positive constants
$C$, $h_0$, $R_0$ and $\lambda_1$ such that, for all $R\geq R_0$,
$h\in(0,h_0]$,  and $\lambda\in(-\infty,\lambda_1]$, we have,
\begin{equation}\label{eq-sec:nb'}
\mathcal N_{\rm corn}(\alpha,b,R,\lambda)\leq  C\left(R^2+1\right)\,,\end{equation}
and
\begin{equation}\label{eq-sec:en'}
\mathcal E_{\rm corn}(\alpha,b,R,\lambda)\leq  Chb\left(R^2+1\right)\,.
\end{equation}
\end{lem}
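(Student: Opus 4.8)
The plan is to reduce the two-dimensional sector problem to the one-dimensional model operators $\mathcal L[\xi]$ studied in Section~\ref{Sub-Sec-M0-1}, exactly in the spirit of the cylinder estimate of Lemma~\ref{roughestimate}, the difference being that the sector does not admit a global separation of variables. I would first rescale: setting $x=\sqrt{h}\,y$ turns $P_{h,\alpha}$ on $\Omega_{R,h,\alpha}$ into the operator $-(\nabla - ib\Ab_0)^2$ on the fixed sector $\Omega_{R,1,\alpha}$ of radius $R$, with eigenvalues $e_j/h$; the threshold $hb(1+\lambda)$ becomes $b(1+\lambda)$, so it suffices to bound the number of eigenvalues of the rescaled operator below $b(1+\lambda)$ and the corresponding energy sum, and then multiply by $h$ and $hb$ respectively to recover \eqref{eq-sec:nb'}--\eqref{eq-sec:en'}. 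Note also that $\mathcal E_{\rm corn}\le \mathcal N_{\rm corn}\cdot hb(1+\lambda)$ trivially, so once the counting bound \eqref{eq-sec:nb'} is in hand, the energy bound \eqref{eq-sec:en'} follows for $\lambda\le\lambda_1$ with an enlarged constant $C$; thus the whole lemma rests on \eqref{eq-sec:nb'}.

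To prove the counting bound, the plan is to use the min-max principle in the form: $\mathcal N_{\rm corn}$ equals the maximal dimension of a subspace of the form domain on which the quadratic form stays below the threshold, so an upper bound follows from constructing a large enough subspace on whose orthogonal complement the form is bounded below by $b(1+\lambda_1)$. I would cover the sector by the two boundary strips near $\theta=0$ and $\theta=\alpha$ together with the interior region using an IMS-type partition of unity $\chi_1^2+\chi_2^2+\chi_3^2=1$, paying the localization error $\sum\|\nabla\chi_k\|_\infty^2$, which on a fixed (rescaled) sector is an $O(1)$ constant. On the interior piece, away from the boundary, the magnetic form is bounded below by $b$ (the first Landau level) up to the localization error, so only finitely many eigenvalues can dip below $b(1+\lambda_1)$ there for $\lambda_1$ small; the number of these is controlled by the area, giving the $R^2$ contribution. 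On each boundary strip, after straightening the edge I would pass to a partial Fourier decomposition in the tangential variable, reducing the problem to a family of operators of the type $\mathcal L[\xi]$ whose ground-state energy is $\mu_1(\xi)\ge\Theta_0$ and whose second eigenvalue exceeds $1$ uniformly by Lemma~\ref{RuFr-Lemm-kach}; counting the tangential modes that can produce an eigenvalue below $b(1+\lambda_1)$ again yields a bound linear in the edge length, hence $O(R)$, which is absorbed into $C(R^2+1)$.

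The main obstacle I expect is the corner region itself, i.e. the neighborhood of $r=0$ where the two Neumann edges meet at angle $\alpha$ and the tangential Fourier reduction breaks down. Here one cannot separate variables, and the corner is exactly where extra low-lying eigenvalues can be created. The plan is to isolate a fixed $O(1)$-radius neighborhood of the vertex with one of the cutoffs $\chi_k$ and handle it crudely: on this bounded fixed region the operator $-(\nabla-ib\Ab_0)^2$ has compact resolvent, so the number of its eigenvalues below the \emph{fixed} threshold $b(1+\lambda_1)$ is a finite constant $C(\alpha,b)$ independent of $R$ and $h$, contributing only to the additive "$+1$" in \eqref{eq-sec:nb'}. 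Since the vertex neighborhood has fixed size after rescaling, its contribution does not grow with $R$, so combining the interior ($O(R^2)$), edge ($O(R)$), and corner ($O(1)$) counts gives the claimed $C(R^2+1)$; care must be taken that the localization errors from the partition of unity only shift thresholds by an $h$-independent constant, which is why $\lambda_1$ and $R_0$, $h_0$ must be chosen after the partition is fixed.
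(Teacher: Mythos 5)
Your overall architecture (rescaling to a fixed sector of radius $R$, an IMS partition of unity, a corner disc of fixed size handled by compact resolvent, and deducing the energy bound \eqref{eq-sec:en'} from the counting bound \eqref{eq-sec:nb'} via $\mathcal E_{\rm corn}\le \mathcal N_{\rm corn}\cdot hb(1+\lambda)$) coincides with the paper's proof; the corner treatment and the reduction of the energy estimate to the counting estimate are exactly the paper's steps.

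The genuine gap is your treatment of the interior region. You argue that since the magnetic form there is bounded below by $b$ up to localization errors, ``only finitely many eigenvalues can dip below $b(1+\lambda_1)$'' and that ``the number of these is controlled by the area.'' The first clause only gives finiteness (compact resolvent); it gives no quantitative control on the number of eigenvalues in the window between $b$ and $b(1+\lambda_1)$. Since $\lambda_1>0$, the threshold sits strictly above the first Landau level, so this window genuinely contains of order $bR^2/2\pi$ eigenvalues (the Landau degeneracy per unit area), and bounding that number by $C(R^2+1)$ is precisely the nontrivial content of the lemma --- a form lower bound cannot produce it. Nor can one compare with the whole plane by extending Dirichlet functions by zero, because the Landau levels of the whole-plane operator have infinite degeneracy, so min-max gives nothing that way. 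The paper closes exactly this hole by never leaving an untreated bulk region: the annulus $\{A_0/2<|y|<R\}$ is cut into \emph{four angular sectors}, each of aperture less than $\pi/2$, with Dirichlet conditions on the cut rays; acuteness allows each piece to be extended by zero into the cylinder $[0,R]\times(0,R)$ of Lemma~\ref{roughestimate}, whose separation of variables yields the count $\le R^2/(2\pi)+1$ for each piece. Your edge strips could be repaired the same way (your ``tangential Fourier decomposition'' is legitimate only after imposing periodic or Dirichlet conditions at the strip ends, i.e.\ after embedding into that cylinder), but the interior piece needs either this same embedding device or an independent Landau-degeneracy counting bound of Colin de Verdi\`ere type, neither of which appears in your proposal.
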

\begin{proof}
Observe that the upper bound \eqref{eq-sec:en'} on the energy
$\mathcal E_{\rm corn}(\alpha,b,R,\lambda)$ follows immediately from
the definition of $\mathcal E_{\rm corn}(\alpha,b,R,\lambda)$ and
the upper bound in \eqref{eq-sec:nb'}. Therefore, we establish the
bound in \eqref{eq-sec:nb'}. The method that will be used is
introduced in \cite{CdV} and based on a decomposition of the
operator via a partition of unity and on the min-max variational
principle.

To simplify the notation, we denote by $\Omega$ and $P$ the set $\Omega_{R,h,\alpha}$ introduced in \eqref{Omega-alpha'} and the operator $P_{h,\alpha}$ respectively.

Let $A_0$ be a positive real number whose choice will be specified
later. Suppose $R>A_0$. We cover $\Omega$ by two sets,
\begin{multline*}
U_1(h)=\{x\in\Omega~:~|x|< A_0h^{1/2}\}\,,\quad
U_2(h)=\{x\in\Omega~:~ \frac{A_0 h^{1/2}}2< |x|< Rh^{1/2}\}\,.
\end{multline*}
Consider a partition of unity of
$\Omega$,
$$\sum_{j=1}^2\chi_j^2(x)=1\quad{\rm and}\quad \sum_{j=1}^2|\nabla\chi_j(x)|^2\leq \frac{C_1}{A_0^2h}~{\rm
in}~\Omega\,,$$ where $C_1>0$ is a universal constant   and ${\rm supp}\,\chi_j\subset U_j(h^{1/2})$.

Let $u\in H^1(\Omega)$. The following decomposition formula
holds true,
$$\int_{\Omega}|(\nabla-ib\Ab_0)u|^2\,dx=
\sum_{j=1}^2\int_{\Omega}\left(|(\nabla-ib\Ab_0)\chi_{j}u|^2
-h^2|\nabla\chi_{j}|^2\,|u|^2\right)dx\,.$$ This decomposition
yields the inequality stated  below by using the upper bound on
$|\nabla\chi_{j}|$~:
$$\int_{\Omega}|(\nabla-ib\Ab_0)u|^2\,dx\geq
\sum_{j=1}^2\int_{\Omega}\left(|(\nabla-ib\Ab_0)\chi_{j}u|^2
-C_1A_0^{-2}h\,|u|^2\right)dx\,.$$
By using the method in \cite{CdV}
and the variational min-max principle, we get that,
\begin{equation}\label{eq-ub-number}
\mathcal N_{\rm corn}(\alpha,b,R,\lambda)\leq N(\Lambda h,P_{U_1(h)})+N(\Lambda h,P_{U_2(h)})\,,
\end{equation}
where $\Lambda=(1+\lambda)b+C_1A_0^{-2}$,
$P_{U_1(h)}=-(h\nabla-ib\Ab_0)^2$ is the operator in $L^2(U_1(h))$
with Dirichlet condition on $r=h^{1/2}$ and Neumann condition on the
other parts of the boundary of $U_1(h)$, and
$P_{U_2(h)}=-(h\nabla-ib\Ab_0)^2$ is the operator in $L^2(U_2(h))$
with Neumann boundary condition on $\theta=0$ and $\theta=\alpha$
and Dirichlet condition on the other parts of the boundary. Notice
that we use polar coordinates $(r,\theta)$ in the definition of the
domains.

Let $\lambda_0$ be as in the statement of Lemma~\ref{roughestimate}
and select $\lambda_1\in(0,\lambda_0)$. In light of
\eqref{eq-ub-number}, Theorem~\ref{thm-disspec} will be proved once
the  statement that follows below is shown to be true:
\begin{align}
\exists~C>0,&~N(\Lambda h,P_{U_1(h)})\leq C{\rm ~and}\label{eq-ub-number**}\\
&~N(\Lambda h,P_{U_2(h)})\leq C\left(R^2+1\right)\,.
\label{eq-ub-number*}
\end{align}

\paragraph{\it Proof of \eqref{eq-ub-number**}}
Applying the re-scaling $y=h^{-1/2}x$ changes the domain $U_1(h)$ to
$U_1(A_0)$ and the operator $P_{U_1(h)}$ to the operator in
$L^2(U_1(A_0))$,
$$P_{U_1(A_0)}=-(\nabla-ib\Ab_0)^2\,.$$
More precisely, a number $e$ is in the spectrum of $P_{U_1(h)}$ if
and only if $e/ h$ is in the spectrum of $P_{U_1(A_0)}$. Thus, the
term we wish to estimate is given as follows,
$$N(\Lambda h,P_{U_1(h)})=N\big(\Lambda,P_{U_1(A_0)}\big)\,.$$
Since the operator $P_{U_1(1)}$ has compact resolvent and the number
$\Lambda$ is bounded independently of $h\in(0,1]$, it follows that
$N\big(\Lambda,P_{U_1(1)}\big)$ is bounded as  $h$ varies in $(0,1]$
too. This establishes the upper bound in \eqref{eq-ub-number**}.

\paragraph{\it Proof of \eqref{eq-ub-number*}}
As we shall see, the upper bound in \eqref{eq-ub-number*} follows
mainly from Lemma~\ref{roughestimate}. Let $\epsilon$ be a positive
real number such that,
$$0<\epsilon<\min\left(\frac\alpha8,\frac\pi4-\frac\alpha8\right)\,.$$ Recall the definition of the set
$$U_2(h)=\{x\in\Omega~:~
\frac{A_0h^{1/2}}2< |x|< Rh^{1/2}\}\,.
$$
We cover $U_2(h)$ by four sets,
\begin{align*}
&V_1(h)=\{(r\cos\theta,r\sin\theta)\in \overline{U_2(h)}~:~0\leq \theta< \frac\alpha4\,\}\,,\\
&V_2(h)=\{(r\cos\theta,r\sin\theta)\in \overline{U_2(h)}~:~\frac\alpha4-\epsilon< \theta<\frac{\alpha}2+\epsilon\,\}\,,\\
&V_3(h)=\{(r\cos\theta,r\sin\theta)\in \overline{U_2(h)}~:~\frac\alpha2-\epsilon< \theta<\frac{3\alpha}4+\epsilon\,\}\\
&V_4(h)=\{(r\cos\theta,r\sin\theta)\in \overline{U_2(h)}~:~\frac{3\alpha}4< \theta\leq\alpha\,\}\,.
\end{align*}
Define four operators $P_{V_1(h)}$, $P_{V_2(h)}$, $P_{V_3(h)}$ and
$P_{V_4(h)}$ in $L^2(V_1(h))$, $L^2(V_2(h))$, $L^2(V_3(h))$ and
$L^2(V_4(h))$ respectively. The four operators are self-adjoint
realizations of the differential operator $-(h\nabla-ib\Ab_0)^2$.
Functions in the domain of $P_{V_1(h)}$ satisfy Neumann condition on
$\theta=0$ and Dirichlet condition on the other parts of the
boundary. Similarly, functions in the domain of $P_{V_4(h)}$ satisfy
Neumann condition on $\theta=\alpha$ and Dirichlet condition on the
other parts of the boundary. Functions in the domains of
$P_{V_2(h)}$ and $P_{V_3(h)}$ satisfy Dirichelt boundary condition.
Thus, the bottoms of the spectra of the operators $P_{V_2(h)}$ and
$P_{V_3(h)}$ are larger or equal to $bh$.

Notice that the operators $P_{V_1(h)}$ and $P_{V_4(h)}$ are unitary equivalent and hence have same spectra. Also, the operators $P_{V_2(h)}$ and $P_{V_3(h)}$ are unitary equivalent and have same spectra.

We apply an argument similar to the one we did to obtain
\eqref{eq-ub-number}. By introducing a partition of unity supported
in $V_1(h)$, $V_2(h)$, ${V_3(h)}$ and  $V_4(h)$, using the IMS
decomposition formula and the variational min-max principle, we get
a constant $C_2>0$ that is allowed to depend on $\epsilon$ but not
on $h$ and such that,
\begin{align}
N(\Lambda h,P_{U_2(h)})&\leq  \sum_{j=1}^4 N(\widetilde \Lambda h,P_{V_j(h)})\nonumber\\
&=2\sum_{j=1}^2 N(\widetilde \Lambda
h,P_{V_j(h)})\,,\label{eq-up-numb-''}\end{align} with
$\widetilde\Lambda=\Lambda+C_1A_0^{-2}+C_2h$.

Recall the number $\mathcal N(\lambda,b,S,T)$ introduced in \eqref{nb-torus}. This number counts the eigenvalues of the operator $P_U$ in the cylinder, where $U=[0,S]\times (0,h^{1/2}T)$.

Since the angle $\alpha/4$ is acute and   Dirichlet condition is imposed
in the form domain of $P_{V_1(h)}$,
functions in the form domain of $P_{V_1(h)}$ can be extended by zero to functions defined in the cylinder,
$$U=[\,0, Rh^{1/2}\,]\,\times\, (\,0, R h^{1/2}\,)\,.$$
The extended functions are in the form domain of the operator $P_U$.
Thus, by the variational min-max principle, it is easy to see that,
$$N(\Lambda h,P_{V_1(h)})\leq  \mathcal N\big(\,\widetilde \lambda\,,b,Rh^{1/2}, R\,\big)\,,$$
with $\widetilde\lambda =\lambda+2b^{-1}C_1A_0^{-2}+C_2b^{-1}h$.
Similarly, we have,
$$N(\Lambda h,P_{V_2(h)})\leq  \mathcal N\big(\,\widetilde \lambda\,,b,Rh^{1/2}, R\,\big)\,.$$
 Thus we get from \eqref{eq-up-numb-''},
\begin{equation}\label{eq-up-numb-3'}
N(\Lambda h,P_{U_2(h)})\leq 2\mathcal N\big(\,\widetilde \lambda\,,b,Rh^{1/2}, R\,\big)\,.
\end{equation} Select $h_0$ sufficiently small and $A_0$ sufficiently large such that
$$2b^{-1}C_1A_0^{-2}+C_2b^{-1}h_0<\lambda_0-\lambda_1\,.$$
In this way, we get  for all $\lambda\in(0,\lambda_1)$ and
$h\in(0,h_0]$, $\widetilde\lambda\in(0,\lambda_0]$. Consequently, it
follows from Lemma~\ref{roughestimate} that $\mathcal
N\big(\,\widetilde \lambda\,,b,Rh^{1/2}, R\,\big)$ is bounded by a
constant times $(R^2+1)$, and thereby get the  upper bound in
\eqref{eq-ub-number*}.
\end{proof}

\section{Asymptotic number of low-lying eigenvalues near
corners}\label{sec:nb:corn}

\subsection{The operator in an infinite sector}\label{sec:alpha}
Let $\alpha\in(0,2\pi)$ and
\begin{equation}\label{Omega-alpha}
\Omega_\alpha=\{\,(r\cos\theta,r\sin\theta)\in\R^2~:~r\geq0\,,\quad 0<\theta<\alpha\,\}\,.
\end{equation}
Define the magnetic potential, \begin{equation}\label{eq:mp}\forall
~(x_1,x_2)\in\R^2\,,\quad \Ab_0(x_1,x_2)=(-x_2,0)\,,\end{equation}
whose curl is constant and equal to $1$.

Consider the self-adjoint operator
$P_{\Omega_\alpha}=-(\nabla-i\Ab_0)^2$ in $L^2(\Omega_\alpha)$ whose
domain is:
$$D(P_{\Omega_\alpha})=\{\,u\in
H^1(\Omega_\alpha)~:~P_{\Omega_\alpha}u\in
L^2(\Omega_\alpha)\,,~\nu\cdot(\nabla-i\Ab_0)u=0~{\rm
on}~\partial\Omega_\alpha\setminus\{0\}\,\}\,,$$ where $\nu$ is the unit outward normal vector of $\partial\Omega_\alpha$.

It is proved in \cite{Bon} that the bottom of the essential spectrum
of $P_{\Omega_\alpha}$ is a universal constant
$\Theta_0\in(\frac12,1)$. Furthermore, when
$\alpha\in(0,\frac\pi2]$, the operator $P_{\Omega_\alpha}$ has
discrete spectrum below $\Theta_0$.

Many questions connected with the spectrum of the operator
$P_{\Omega_\alpha}$ are left open. Among these questions are the
following:
\begin{itemize}
\item Is the spectrum of $P_{\Omega_\alpha}$ below $\Theta_0$
finite ?
\item When $\alpha=\pi$, the operator $P_{\Omega_\pi}$ is the
half-plane and its spectrum is purely essential and consists of
the interval $[\Theta_0,\infty)$. Is $[\Theta_0,\infty)$ the
essential spectrum of $P_{\Omega_\alpha}$ for any $\alpha$ ?
\item It is conjectured in \cite{Bon} that
the ground state energy (bottom of the spectrum) of
$P_{\Omega_\alpha}$ is constant and equal to $\Theta_0$ when
$\alpha\in[\pi,2\pi)$. This conjecture has not been proved or
disproved yet (see \cite{Bon}).
\end{itemize}

\subsection{Assumptions on the domain}\label{sec:omega}
We describe precisely the regularity properties of the domain
$\Omega$. The assumptions will be the same as those made in
\cite{Bon, BoFo}.

In this and the subsequent sections,  $\Omega$ is an  open and connected set in
$\R^2$ whose boundary is compact and consists of  a curvilinear
polygon of class $C^3$. By saying that the boundary $\Gamma$ of
$\Omega$ is a curvilinear polygon (of class $C^3$) we mean the
following (see {\cite[p.34--42]{Gr}). For every $x\in\Gamma$, there
exists a neighborhood $V$ of $x$ in $\mathbb R^2$ and a mapping
$\psi$ from $V$ to $\mathbb R^2$ such that
\begin{enumerate}
\item $\psi$ is injective,
\item $\psi$ together with $\psi^{-1}$ (defined on $\psi(V)$) belongs to the class $C^{3}$,
\item $\Omega\cap V$ is either
$\{y\in\Omega~:~ \psi_{2}(y)<0\}$, $\{y\in\Omega~:~
\psi_{1}(y)<0\mbox{ and }\psi_{2}(y)<0\}$, or $\{y\in\Omega~:~
\psi_{1}(y)<0\mbox{ or }\psi_{2}(y)<0\}$, where $\psi_{j}$
denotes the components of $\psi$.
\end{enumerate}

The boundary $\Gamma$ of $\Omega$ is a piecewise smooth curve. We
work under the assumption that the boundary $\Gamma$  consists of a
finite number of smooth curves $\overline \Gamma_{k}$ for
$k=1,\ldots,m$. The family $(\Gamma_k)$ is the minimal family of
curves making up the boundary $\Gamma$. The curve $\overline
\Gamma_{k+1}$ follows $\overline\Gamma_{k}$ according to a positive
orientation, on each connected component of $\Gamma$. Let
${\mathsf{s}}_{k}$ denotes the vertex which is the end point of
$\overline \Gamma_{k}$. In a neighborhood of $\partial\Omega$,
define a vector field $\nu_{k}$. For each $k$, the vector $\nu_k$ is
 the unit normal a.e. on $\Gamma_{k}$.

Let $\Sigma$ be the set of vertices the domain $\Omega$ has. Suppose
that $\Sigma\not=\emptyset$ and consists exactly of $N$ vertices.
This assumption certifies that the domain $\Omega$ does not have a
smooth boundary but only a {\it piecewise} smooth boundary. When
$\Sigma=\emptyset$, the boundary of $\Omega$ is smooth and the
operator in $L^2(\Omega)$ is the one studied in \cite{Fr, FK}.

At each vertex $\mathsf s_k\in\Sigma$, let $\alpha_{\mathsf{s_k}}$
denotes the angle between $\overline\Gamma_{k}$ and
$\overline\Gamma_{k+1}$ measured towards the interior.

\subsection{Main result}

For each angle $\alpha$, recall the sectorial domain $\Omega_\alpha$
introduced in \eqref{Omega-alpha}. Let $P_{\mathfrak
b,\Omega_\alpha}=-(\nabla-i\mathfrak b\Ab_0)^2$ be the operator in
$L^2(\Omega_\alpha)$ introduced in Section~\ref{sec:alpha} (with
Neumann boundary condition on the smooth part of the boundary of the
sector $\Omega_\alpha$). The number
\begin{equation}\label{eq-nb-alpha'}
\mathsf n(\alpha,\lambda;\mathfrak b)={\rm
tr}\Big(\mathbf
1_{(-\infty,\lambda)}(P_{\Omega_\alpha})\Big)=\sum_{\Lambda<\lambda}{\rm
dim}\,\Big({\rm Ker}\,(P_{\Omega_\alpha}-\Lambda{\rm Id})\Big)
\end{equation}
is finite for all $\alpha$ and $\lambda<\Theta_0$.

If $\mathfrak b=1$, we write,
\begin{equation}\label{eq-nb-alpha}
\mathsf n(\alpha,\lambda)=\mathsf n(\alpha,\lambda;\mathfrak b=1)\,.
\end{equation}
By a scaling argument, it is easy to see that,
$$\mathsf n(\alpha,\lambda;\mathfrak b)=\mathsf n(\alpha,\lambda/\mathfrak b)\,.$$
Let the domain $\Omega$ be as described in Section~\ref{sec:omega}.
Define the magnetic Schr\"odinger operator,
\begin{equation}\label{eq-op:h}
P_{h,\Omega}=-(h\nabla-i\Ab)^2\,,\quad {\rm in}\quad
L^2(\Omega)\,,\end{equation} where $\Ab\in C^2(\Omega;\R^2)$ is the magnetic
potential, $h>0$ is the {\it semi-classical} parameter and
$\Bb=\curl\Ab$ is the magnetic field. The domain of the operator
$P_{h,\Omega}$ is,
\begin{multline*}
D(P_{h,\Omega})=\{u\in
L^2(\Omega)~:~(h\nabla-i\Ab)^k\in
L^2(\Omega)\,,~k=1,2,\\
\nu_j\cdot(h\nabla-i\Ab)u=0{\rm ~on~
}\Gamma_k\,,~k=1,\cdots,m\}\,.
\end{multline*}

Define two constants,
\begin{equation}\label{eq-b:b'}
b=\inf_{x\in\overline\Omega}\Bb(x)\,,\quad b'=\inf_{x\in\partial\Omega}\Bb(x)\,.
\end{equation}

The main result of this section is:

\begin{thm}\label{thm-nb}
Let the constant $\Theta_0$ be as defined in \eqref{Th-gam},
$\lambda\in\big(-\infty,\min(\Theta_0b',b)\big)$ and $N(\lambda h)$
the number of eigenvalues of the operator $P_{h,\Omega}$ below
$\lambda h$ counting multiplicities. Suppose that the magnetic field
$\Bb(x)$ is selected such that $b>0$. There exists a positive number
$h_0$ such that, for all $h\in(0,h_0)$, the following equality holds
true,
\begin{equation}\label{eq-thm-nb}
N(\lambda h)=\sum_{k=1}^N \mathsf n\left(\alpha_{\mathsf s_k},\frac{\lambda}{\Bb(\mathsf s_k)}\right)\,.
\end{equation}
The number $h_0$ depends only on the angles $\alpha_{\mathsf s_k}$ and the domain $\Omega$.
\end{thm}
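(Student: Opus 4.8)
The plan is to prove that, under the hypothesis $\lambda<\min(\Theta_0b',b)$, every eigenvalue of $P_{h,\Omega}$ below $\lambda h$ is produced in a blow-up neighbourhood of a vertex, while the smooth part of the boundary and the interior create no eigenvalue below this threshold. Since both $N(\lambda h)$ and the right-hand side of \eqref{eq-thm-nb} are non-negative integers, it suffices to establish the two matching asymptotic bounds
\begin{equation*}
\limsup_{h\to0}N(\lambda h)\leq\sum_{k=1}^N\mathsf n\Big(\alpha_{\mathsf s_k},\tfrac{\lambda}{\Bb(\mathsf s_k)}\Big)\leq\liminf_{h\to0}N(\lambda h).
\end{equation*}
An integer-valued quantity that converges to an integer is eventually equal to it, so these two bounds force the exact identity \eqref{eq-thm-nb} for all $h$ below some $h_0$ depending only on the angles and the geometry of $\Omega$.

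For the upper bound I would fix a large radius $R$ and introduce a partition of unity $1=\sum_{k}\chi_k^2+\chi_{\rm bd}^2+\chi_{\rm int}^2$ on $\overline\Omega$, where each $\chi_k$ is supported in the disc $D_k(h)=\{|x-\mathsf s_k|<h^{1/2}R\}$, $\chi_{\rm bd}$ in a tubular neighbourhood of the smooth part of $\Gamma$ away from $\Sigma$, and $\chi_{\rm int}$ in the interior. The IMS localization formula and the min-max principle (as in the proof of Lemma~\ref{thm-disspec}) bound $N(\lambda h)$ by the sum of the counts below $(\lambda+CR^{-2})h$ of the associated Dirichlet--Neumann pieces. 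By the classical lower bounds for the local ground-state energy, namely $\Theta_0\Bb(x)h+o(h)$ at a smooth boundary point and $\Bb(x)h+o(h)$ in the interior (see \cite{HeMo,Fr,FK}), and since $\lambda<\min(\Theta_0b',b)$, the boundary and interior pieces carry no eigenvalue below the threshold once $R$ is fixed large and $h$ is small; they contribute $0$. On each disc I would rescale $x=\mathsf s_k+h^{1/2}y$, straighten the two edges by the $C^3$ diffeomorphism furnished by the curvilinear-polygon assumption, and gauge away the linear part of $\Ab$; this turns the local operator into $-(\nabla_y-i\Bb(\mathsf s_k)\Ab_0)^2$ on the finite sector $\{|y|<R,\ 0<\theta<\alpha_{\mathsf s_k}\}$ up to an $o(1)$ error, whose low eigenvalues converge to those of $P_{\Bb(\mathsf s_k),\Omega_{\alpha_{\mathsf s_k}}}$. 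Letting $h\to0$ and then $R\to\infty$, the count below the shifted threshold converges to the number of eigenvalues of this infinite-sector model below $\lambda$, which equals $\mathsf n(\alpha_{\mathsf s_k},\lambda/\Bb(\mathsf s_k))$ by the scaling identity $\mathsf n(\alpha,\lambda;\mathfrak b)=\mathsf n(\alpha,\lambda/\mathfrak b)$; the discreteness of the spectrum below the essential threshold $\Theta_0\Bb(\mathsf s_k)$ \cite{Bon} keeps this count finite and stable as $R$ grows.

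For the lower bound I would construct quasimodes from the model eigenfunctions. For each vertex let $u_{k,1},\dots,u_{k,m_k}$ be an orthonormal family of eigenfunctions of $P_{\Bb(\mathsf s_k),\Omega_{\alpha_{\mathsf s_k}}}$ attached to its $m_k=\mathsf n(\alpha_{\mathsf s_k},\lambda/\Bb(\mathsf s_k))$ eigenvalues below $\lambda$. Because $\lambda<\Theta_0b'\leq\Theta_0\Bb(\mathsf s_k)=\inf\sigma_{\rm ess}(P_{\Bb(\mathsf s_k),\Omega_{\alpha_{\mathsf s_k}}})$ \cite{Bon}, these eigenfunctions decay exponentially away from the tip (Agmon/Combes--Thomas estimates, cf. \cite{FH-book}), so they may be truncated at scale $h^{1/2}R$ with an exponentially small error. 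Undoing the rescaling and reinserting the gauge factor produces trial functions $\widetilde u_{k,j}$ supported in $D_k(h)$ with $\langle\widetilde u_{k,j},P_{h,\Omega}\widetilde u_{k,j}\rangle\leq(\lambda-\delta)h\,\|\widetilde u_{k,j}\|^2$ for some $\delta>0$ and $h$ small. Trial functions anchored at distinct vertices have disjoint supports, and within one vertex their Gram matrix is the identity up to exponentially small corrections; hence they span a subspace of dimension $\sum_k m_k$ on which the quadratic form of $P_{h,\Omega}-\lambda h$ is negative, and the min-max principle yields $N(\lambda h)\geq\sum_k m_k$.

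The hard part will be the localization estimate underlying both bounds: one must show that an eigenfunction of energy below $\min(\Theta_0b',b)h$ is exponentially concentrated, on the scale $h^{1/2}$, near the vertex set $\Sigma$. This rests on uniform lower bounds for the effective energy away from the corners and on controlling the errors of the $C^3$ change of variables and of the non-constant field $\Bb$, so that the rescaled corner operator genuinely approaches the constant-field model $P_{\Bb(\mathsf s_k),\Omega_{\alpha_{\mathsf s_k}}}$ and its low-lying eigenvalues converge; this is the technical core. A secondary but genuine subtlety is the endpoint case in which $\lambda$ equals an eigenvalue of some $P_{\Bb(\mathsf s_k),\Omega_{\alpha_{\mathsf s_k}}}$: there the strict inequality in the definition \eqref{eq-nb-alpha'} of $\mathsf n$ and the discreteness of the model spectrum below $\Theta_0\Bb(\mathsf s_k)$ must be invoked to decide the count, and in that case $h_0$ may in addition depend on the spectral gap separating $\lambda$ from the model eigenvalues.
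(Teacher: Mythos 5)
Your strategy is sound and, at bottom, it is the same as the paper's: an IMS partition separating corner, smooth-boundary and interior regions; the thresholds $\Theta_0 b'$ and $b$ to show the last two types of pieces contribute nothing; reduction of each corner piece to the constant-field sector model $P_{\Bb(\mathsf s_k),\Omega_{\alpha_{\mathsf s_k}}}$; min-max in both directions; and the observation that an integer-valued quantity with an integer limit is eventually equal to it. The differences are in the execution, and they are worth recording. First, you localize at scale $h^{1/2}R$ and take a double limit ($h\to0$, then $R\to\infty$), which forces you to pass through finite sectors with Dirichlet condition at $|y|=R$ and to invoke convergence of their spectra to that of the infinite sector. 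The paper instead localizes at scale $h^{3/8}$ (the partition of \cite[Prop.~11.2]{Bon}, with radii proportional to $\min(|\tan\alpha_{\mathsf s_k}|,1)$ precisely so that non-corner boundary balls meet a single edge); with $\rho=3/8$ the IMS error $h^{2-2\rho}=h^{5/4}$ and the field/straightening errors of \cite[p.~252]{Bon} are all $o(h)$, so the comparison is made directly with the \emph{infinite} sector at thresholds $\frac{\lambda\pm Ch^{1/4}}{(1\mp Ch^{1/4})\Bb(\mathsf s_k)}$, in a single limit and with explicit error powers. Your route is more self-contained (it does not lean on Bonnaillie's estimates), at the price of two extra standard ingredients: Dirichlet-truncation convergence and Agmon decay. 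Second, for the lower bound you build explicit quasimodes (truncated sector eigenfunctions, almost-orthogonal Gram matrix), whereas the paper uses Dirichlet bracketing---restriction to functions vanishing outside $\bigcup_k B(\mathsf s_k,h^{3/8})$---combined with the reverse form comparison from \cite{Bon}; these are two dressings of the same mechanism (the bracketing argument still needs, implicitly, truncated model eigenfunctions as trial functions), and yours has the merit of being explicit.

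Two details you should nail down. (a) In your upper bound, the cutoff $\chi_{\rm bd}$ supported in a tubular neighbourhood of $\Gamma\setminus\Sigma$ necessarily contains, near each vertex, points at distance comparable to $h^{1/2}R$ from it, where its support touches \emph{both} edges; the bound ``ground state $\geq \Theta_0\Bb(x) h-o(h)$ at a smooth boundary point'' does not apply verbatim there. You need a further sub-partition into pieces each meeting a single edge---this is exactly what the $\tan\alpha$-dependent radii accomplish in the paper's partition, and what the covering $V_1,\dots,V_4$ does in the proof of Lemma~\ref{thm-disspec}; the extra IMS error is $O(h/R^2)$ and is harmless in your fixed-$R$-then-$h\to0$ scheme. (b) The endpoint subtlety you flag is real, and it is shared by the paper: if $\lambda/\Bb(\mathsf s_k)\in\Spec\big(P_{\Omega_{\alpha_{\mathsf s_k}}}\big)$ for some $k$, the upper-bound threshold approaches $\lambda/\Bb(\mathsf s_k)$ from above, so that eigenvalue is counted for every $h>0$; isolatedness of the spectrum does not repair this, and the paper's step $\mathsf n\big(\alpha_{\mathsf s_k},\frac{\lambda+Ch^{1/4}}{(1-Ch^{1/4})\Bb(\mathsf s_k)}\big)=\mathsf n\big(\alpha_{\mathsf s_k},\frac{\lambda}{\Bb(\mathsf s_k)}\big)$ fails in that case. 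The exact equality \eqref{eq-thm-nb} should therefore be read under the implicit assumption that $\lambda/\Bb(\mathsf s_k)$ is not a model eigenvalue for any $k$; your proposal is at least explicit about this, which the paper is not.
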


Theorem~\ref{thm-nb} gives the exact number of low-lying eigenvalues
of $P_{h,\Omega}$ corresponding to corners in the domain $\Omega$.
When $\lambda>\Theta_0$, we will see in Section~\ref{sec:extension}
that corners  no more affect the leading order term of $N(\lambda
h)$.

\subsection{Proof of Theorem~\ref{thm-nb}}\label{sec:nb}

\begin{proof}[Upper bound]
Let $\rho$ be a positive constant satisfying $0<\rho<1$. We start by
choosing a partition of unity $\chi_{k,h}$ introduced in
\cite[Prop.~11.2]{Bon} satisfying:
$$\sum_k|\chi_{k,h}|^2=1\,,~
\sum_{k}|\nabla\chi_{k,h}|^2\leq Ch^{-2\rho}{\rm ~in~}\R^2\,,\quad
{\rm supp}\,\chi_{k,h}\subset B(z_j,c_kh^\rho)\,,$$ with the choice of
indices such that
$$\left\{
\begin{array}{l} z_k=\mathsf s_k~{\rm and~}c_k=1~{\rm for~ all~}
k=1,2,\cdots,N\,,\\
{\rm If~}k\not\in\{1,\cdots,m\}~{\rm and~}z_k\not\in\partial\Omega,~{\rm then~}B(z_k,ckh^\rho)\cap\partial\Omega=\emptyset~{\rm and~}c_k=\frac12\min(\tan\alpha_{\mathsf s_k},1)\,,\\
{\rm If~}k\not\in\{1,\cdots,m\}~{\rm and~}z_k\in\partial\Omega,~{\rm then~}B(z_k,c_kh^\rho)\cap\Sigma=\emptyset\,,
\\\hskip3cm B(z_k,c_kh^\rho)\cap\partial\Omega{\rm~is~connected}\break
~{\rm and~}c_k=\frac12\min(|\tan\alpha_{\mathsf s_k}|,1)\,.
\end{array}\right.$$
Recall that $\Sigma$ is the set of vertices of the domain $\Omega$.
If $u$ is a function in the form domain of $P_{h,\Omega}$, define,
$$q(u)=\int_\Omega|(h\nabla-i\Ab)u|^2\,dx\,.$$
The following decomposition formula holds true for every function
$u$ in the form domain of the operator $P_{h,\Omega}$,
$$q(u)=\sum_{k=1}^mq(\chi_{k,h}u)+\sum_{\substack{k\geq
m\\z_k\in\partial\Omega}}q(\chi_{k,h}u)+\sum_{\substack{j\geq
m\\z_k\not\in\partial\Omega}}q(\chi_{k,h}u)-h^2\sum_k\int_\Omega|\nabla\chi_{k,h}|^2|u|^2\,dx\,.$$
Using the upper bound on $|\nabla\chi_{k,h}|^2$, we get the lower
bound,
\begin{equation}\label{eq-q(u):geq}
q(u)\geq\sum_{k=1}^mq(\chi_{k,h}u)+\sum_{\substack{k\geq
m\\z_k\in\partial\Omega}}q(\chi_{k,h}u)+\sum_{\substack{k\geq
m\\z_k\not\in\partial\Omega}}q(\chi_{k,h}u)-Ch^{2-2\rho}\int_\Omega|u|^2\,dx\,.\end{equation}
Let $k\in\{1,2,\cdots,m\}$. It is proved in \cite[p.~252]{Bon} that
by performing a change of variable $y=\psi_k(x)$ and a gauge
transformation (defined by a function $\phi_k$), the following lower
bound holds true,
\begin{equation}\label{eq-q(u):cor'}
q(\chi_{k,h}u)\geq \int_{\Omega_{\alpha_{\mathsf s_k}}}\Big((1-Ch^\rho-Ch^{2\theta})
|(h\nabla-iB_k\Ab_0)v_k|^2-Ch^{4\rho-2\theta}|v_k|^2\Big)\,dy\,.\end{equation}
Here $\theta\in(0,1)$ is any constant, $B_k=\Bb(z_k)$, $\Ab_0$ is
the magnetic potential in \eqref{eq:mp} and
$$v_k(y)=e^{-i\phi_k(y)}\,\Big(\chi_{k,h}\,u\Big)\circ\psi_k^{-1}(y)\,.$$
The optimal choice of $\rho$ and $\theta$ is $\rho=3/8$ and
$\theta=1/8$. This produces the lower bound:
\begin{equation}\label{eq-q(u):cor}
q(\chi_{k,h}u)\geq \int_{\Omega_{\alpha_{\mathsf s_k}}}\Big((1-Ch^{1/4})|(h\nabla-iB_k\Ab_0)v_k|^2-Ch^{5/4}|v_k|^2\Big)\,dy\,.\end{equation}
Similarly, when $k\not\in\{1,2,\cdots,m\}$ and
$z_k\in\partial\Omega$, by applying a change of variable and a gauge
transformation, we obtain the lower bound (see \cite{Fr}),
\begin{equation}\label{eq-q(u):bnd}
q(\chi_{k,h}u)\geq \int_{U_h}\Big((1-Ch^{1/4})|(h\nabla-iB_k\Ab_0)v_k|^2-Ch^{5/4}|v_k|^2\Big)\,dy\,,\end{equation}
where $U_h=\{y=(y_1,y_2)\in\R^2~:~y_1\in(0,h^{3/8})~{\rm
and~}y_2\in(0,\infty)\,\}$. By inserting the lower bounds in
\eqref{eq-q(u):cor} and \eqref{eq-q(u):bnd} into \eqref{eq-q(u):geq}
we obtain,
\begin{align}\label{eq-q(u):geq'}
q(u)\geq& \sum_{k=1}^m\int_{\Omega_{\alpha_{\mathsf s_k}}}\Big((1-Ch^{1/4})|(h\nabla-iB_k\Ab_0)v_k|^2-Ch^{5/4}|v_k|^2\Big)\,dy\\
&+\sum_{\substack{k\geq m\\z_k\in\partial\Omega}}\int_{U_h}\Big((1-Ch^{1/4})|(h\nabla-iB_k\Ab_0)v_k|^2-Ch^{5/4}|v_k|^2\Big)\,dy\nonumber\\
&+\sum_{\substack{k\geq
m\\z_k\not\in\partial\Omega}}q(\chi_{k,h}u)-Ch^{5/4}\int_\Omega|u|^2\,dx\,.\nonumber\end{align}
By the variational min-max principle, we deduce the following upper
bound on the number $N(\lambda h)$,
\begin{multline*}N(\lambda
h)\leq \sum_{k=1}^m \mathsf n\Big(\alpha_{\mathsf
s_k},\frac{\lambda+Ch^{1/4}}{(1-Ch^{1/4})\Bb(\mathsf s_k)}\Big)+\\
\sum_{\substack{k\geq m\\z_k\in\partial\Omega}}
N\Big(P_{B_k,U_h},\frac{\lambda h+Ch^{5/4}}{1-Ch^{1/4}}\Big)+
\sum_{\substack{k\geq m\\z_k\not\in\partial\Omega}}
N\Big(P_{h,\Omega}^D,\lambda h+Ch^{5/4}\Big)\,.\end{multline*} Here
$P_{B_k,U_h}=-(h\nabla-iB_j\Ab_0)^2$ is the operator in $L^2(U_h)$
with Neumann boundary condition at $y_2=0$ and Dirichlet condition
elsewhere, and $P_{h,\Omega}^D=-(h\nabla-i\Ab)^2$ is the operator in
$L^2(\Omega)$ with Dirichlet boundary condition. The spectrum of
$P_{B_k,U_h}$ starts at $\Theta_0B_kh$ and that of $P_{h,\Omega}^D$
starts above $bh$ (cf. \cite{Fr}). Thus, if
$\lambda<\min(\Theta_0b',b)$ and $h$ is selected sufficiently small,
we get,
$$N\Big(P_{B_k,U_h},\frac{\lambda h+Ch^{5/4}}{1-Ch^{1/4}}\Big)=N\Big(P_{h,\Omega}^D,\lambda
h+Ch^{5/4}\Big)=0\,.$$ Since the spectrum of the operator
$P_{\Omega_{\alpha_{\mathsf s_k}}}$ below $\Theta_0$ consists  of isolated eigenvalues, we get for $h$ sufficiently
small that,
$$\mathsf n\Big(\alpha_{\mathsf
s_k},\frac{\lambda+Ch^{1/4}}{1-Ch^{1/4}}\Big)=\mathsf n\Big(\alpha_{\mathsf
s_k},\frac{\lambda}{\Bb(\mathsf s_k)}\Big)\,.$$ This finishes the proof of the upper bound.
\end{proof}

\begin{proof}[Lower bound]
The proof of the lower bound is similar to the one in \cite{Fr} and uses a bracketing technique. Let $\widetilde P_{h,\Omega}$ be the self adjoint realization of the restriction of the operator $P_{h,\Omega}$ on functions  vanishing outside the set,
$$\bigcup_{k=1}^N B(\mathsf s_k,h^{3/8})\,.$$
By the variational min-max principle, the eigenvalues of $\widetilde
P_{h,\Omega}$ are larger than those of $P_{h,\Omega}$. Thus,
$$N(\lambda h)\geq N(\lambda h,\,\widetilde P_{h,\Omega})\,.$$
We will show next that there exist constants $C>0$ and $h_0>0$ such that, for all $h\in(0,h_0]$, we have,
\begin{equation}\label{eq-lb-nb}
N(\lambda h,\,\widetilde P_{h,\Omega})\geq \sum_{k=1}^N
\mathsf n\Big(\alpha_{\mathsf s_k},\frac{\lambda-Ch^{1/4}}{(1+Ch^{1/4})\Bb(\mathsf s_k)}\Big)\,.\end{equation}
If $h$ is made sufficiently small, then
$$ \mathsf n\Big(\alpha_{\mathsf s_k},\frac{\lambda-Ch^{1/4}}{(1+Ch^{1/4})\Bb(\mathsf s_k)}\Big)=
\mathsf n\Big(\alpha_{\mathsf s_k},\frac{\lambda}{\Bb(\mathsf s_k)}\Big)\,,$$
and we get the lower bound we wish to prove.

Derivation of \eqref{eq-lb-nb} is easy. Let $\widetilde q$ be the quadratic form associated to
$\widetilde P_{h,\Omega}$.

Select an arbitrary function
$u$  in the form domain of
$\widetilde P_{h,\Omega}$.  A matching asymptotic upper bound to \eqref{eq-q(u):cor} is proved in \cite[p. 252]{Bon}:
$$\int_{B(\mathsf s_k,h^{3/8})}|(\nabla-i\Ab)u|^2\,dx\leq
\int_{\Omega_{\alpha_{\mathsf s_k}}}\Big((1-Ch^{1/4})|(h\nabla-iB_k\Ab_0)v_j|^2-Ch^{5/4}|v_k|^2\Big)\,dy\,,$$
where $\Bb_k=\Bb(\mathsf s_k)$ and $v_k$ is obtained from $u$ by the change of variables
$y=\psi_k(x)$ times a gauge transformation.
Summing over $k$, we get,
$$\widetilde q(u)\geq \int_{\Omega_{\alpha_{\mathsf s_k}}}\Big((1-Ch^{1/4})|(h\nabla-iB_k\Ab_0)v_k|^2-Ch^{5/4}|v_k|^2\Big)\,dy\,.$$
By the variational min-max principle, we deduce that the eigenvalues of the operator $\widetilde P_{h,\Omega}$ are
larger than those of the direct sum of the operators $P_{\Omega_{\alpha_{\mathsf s_k}},\Bb_k}$,
thereby proving \eqref{eq-lb-nb}.
\end{proof}

\section{Energy and number of eigenvalues in piecewise smooth
domains}\label{sec:extension}

\subsection{Main results}
We will state other results concerning the number and sum of
eigenvalues of the operator $P_{h,\Omega}$ introduced in
\eqref{eq-op:h}. The assumptions on the domain $\Omega$  is as
described in Section~\ref{sec:omega}. The notation and assumption on
the magnetic field $\Bb=\curl\Ab$ is as in \eqref{eq-b:b'} and
Section~\ref{sec:nb}.

Let $(e_j)$ be the increasing sequence of  eigenvalues
of the operator $P_{h,\Omega}$ in the interval $(-\infty,bh]$, counting multiplicities.
If $\lambda\in(-\infty, b]$, define,
\begin{align}
&N(\lambda h)={\rm tr}\Big(\mathbf 1_{(-\infty, \lambda h)}(P_{h,\Omega})\Big)=
{\rm Card}\{e_j~:~e_j\leq \lambda h\}\,,\label{eq-nb-sc}\\
& E(\lambda h)={\rm tr}\Big(\mathbf 1_{(-\infty, \lambda h)}(P_{h,\Omega}-\lambda h{\rm Id})\Big)=\sum_{e_j\leq\lambda h}\big(e_j-\lambda h\big)\,.\label{eq-en-sc}
\end{align}

The main result we prove in this section is Theorem~\ref{thm:FK;Fr}.
Its statement requires the notation $[x]_+=\max(x,0)$ for any real
number $x$, the eigenvalue $\mu_1(\xi)$ introduced in
\eqref{l1-ga,xi}, and the arc-length measure $ds(x)$ along the
boundary of $\Omega$.

\begin{thm}\label{thm:FK;Fr}
For any real number $\lambda\leq b$ and as $h\to0$, it holds true that,
\begin{equation}\label{eq-en:asy}
E(\lambda h)=-\frac{h^{1/2}}{2\pi}
\int_{\partial\Omega}\int_{-\infty}^\infty \Bb(x)^{3/2}\left[\frac{\lambda}{\Bb(x)}-\mu_1(\xi)\right]_+\,d\xi\,ds(x)
+o(h^{1/2})\,.
\end{equation}
Furthermore, if $\lambda<b$, then,
\begin{equation}\label{eq-nb:asy}
N(\lambda h)=\frac{1}{2\pi h^{1/2}}
\int_{\{(x,\xi)\in\partial\Omega\times \R~:~\Bb(x)\mu_1(\xi)\leq \lambda}
\Bb(x)^{1/2}\,d\xi\,ds(x)+o\left(h^{-1/2}\right)\,.
\end{equation}
\end{thm}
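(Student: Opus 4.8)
The plan is to reduce the statement to the already-understood smooth case of \cite{FK, Fr} by an IMS localization that isolates the corners, and then to show — this being the entire point of the theorem — that the corners contribute only lower-order terms. I would fix $\rho=3/8$ and reuse the partition of unity $\{\chi_{k,h}\}$ from the proof of Theorem~\ref{thm-nb}, whose supports have size $h^\rho$, distinguishing three families of pieces: corner balls $B(\mathsf s_k,h^\rho)$, tubes along the smooth arcs $\Gamma_k$, and interior pieces. Applying the IMS decomposition formula, the form $q(u)=\int_\Omega|(h\nabla-i\Ab)u|^2\,dx$ splits into a sum of local forms up to a localization error $-Ch^{2-2\rho}\int_\Omega|u|^2=-Ch^{5/4}\int_\Omega|u|^2$. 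Combined with the two variational principles of Lemmas~\ref{lem-VP1}--\ref{lem-VP2} (for the energy) and the min-max principle (for the number), this turns the upper bounds on $N(\lambda h)$ and $E(\lambda h)$ into sums over the three families.

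For the interior pieces the local operator is bounded below by $(b-o(1))h$, so for $\lambda<b$ they carry no eigenvalue below $\lambda h$ and no energy; the borderline case $\lambda=b$ for $E$ is harmless since $[\lambda h-e_j]_+$ then vanishes on the interior spectrum. For each corner ball I would rescale $y=h^{-1/2}x$, as in the proof of \eqref{eq-q(u):cor}, so that $B(\mathsf s_k,h^\rho)$ becomes a sector of radius $R=h^{\rho-1/2}=h^{-1/8}$ carrying the constant field $B_k=\Bb(\mathsf s_k)\ge b$; since $\lambda\le b\le B_k$, the relevant threshold sits at model level $B_kh(1+\widetilde\lambda)$ with $\widetilde\lambda=\lambda/B_k-1+O(h^{1/4})\le\lambda_1$ for small $h$, so Lemma~\ref{thm-disspec} applies and yields at most $C(R^2+1)=O(h^{-1/4})$ eigenvalues and energy at most $Ch(R^2+1)=O(h^{3/4})$. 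As $h^{-1/4}=o(h^{-1/2})$ and $h^{3/4}=o(h^{1/2})$, each of the finitely many corners is negligible at leading order. For the smooth tubes I would run the local reduction of \cite{Fr, FK} verbatim: straightening the boundary, a gauge transformation and rescaling reduce the tube operator to the fiber family $\Bb(s)\,\mathcal L[\xi]$, and because $\lambda\le b\le\Bb(s)$ while $\inf_\xi\mu_2(\xi)>1$ by Lemma~\ref{RuFr-Lemm-kach}, only the band $\mu_1$ contributes. A Weyl-type count of the fiber then produces the density $\frac{1}{2\pi h^{1/2}}\Bb(s)^{1/2}$ for $N$ and $-\frac{h^{1/2}}{2\pi}\Bb(s)^{3/2}[\lambda/\Bb(s)-\mu_1(\xi)]_+$ for $E$, and integrating over the smooth arcs gives exactly the integrals in \eqref{eq-nb:asy} and \eqref{eq-en:asy}. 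The only difference from the smooth case is that the integration excludes the excised corner arcs, of total length $O(h^\rho)=O(h^{3/8})$; this perturbs the leading coefficient by $O(h^{\rho-1/2})=o(h^{-1/2})$ in $N$ and by $o(h^{1/2})$ in $E$.

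For the matching lower bound I would use Dirichlet bracketing as in \cite{Fr}: partitioning the smooth boundary layer into tangential cells and imposing Dirichlet conditions on the cell interfaces can only raise eigenvalues, so the true counting function dominates the sum of the cell counts, and the same fiber computation recovers the full main term (the excised corner arcs again costing only $o(h^{-1/2})$). Corner and interior cells push $N$ upward and $E$ downward in the harmless direction, so no extra control is needed there. Combining the upper and lower bounds yields \eqref{eq-nb:asy} for $\lambda<b$, and the parallel argument with the weights $[\lambda h-e_j]_+$ and the trial density matrices furnished by Lemmas~\ref{lem-VP1}--\ref{lem-VP2} yields the energy asymptotics \eqref{eq-en:asy} for $\lambda\le b$.

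The step I expect to be the main obstacle is not the leading-order smooth-boundary computation, which is imported essentially unchanged from \cite{FK, Fr}, but rather certifying that the localization and rescaling near the corners genuinely lose only $o(h^{-1/2})$ eigenvalues: this is precisely where Lemma~\ref{thm-disspec} is indispensable. One must verify that, with $\rho=3/8$, the corner count $O(h^{-1/4})$ and corner energy $O(h^{3/4})$ stay below the target orders \emph{simultaneously} with the IMS error $O(h^{5/4})$ and the excised-arc error $O(h^{\rho-1/2})$, so that all competing error sources are dominated by the main term. Balancing these exponents — and, for the lower bound, ensuring that the Dirichlet cells abutting the corners do not spuriously discard main-term eigenvalues — is the delicate bookkeeping at the heart of the argument.
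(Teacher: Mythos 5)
Your strategy for the ``hard'' direction coincides with the paper's: IMS localization at scale $h^{3/8}$ into corner balls, boundary tubes and interior pieces; Lemma~\ref{thm-disspec} after rescaling (with $R=h^{-1/8}$) to show each corner contributes only $O(h^{-1/4})$ states and $O(h^{3/4})$ energy; and the smooth-arc analysis imported from \cite{FK,Fr}. (A structural difference, not a flaw: the paper proves only \eqref{eq-en:asy} and then obtains \eqref{eq-nb:asy} by differentiating in $\lambda h$ via Lemma~\ref{lem-VP2}, whereas you prove both in parallel.) The genuine gap is your treatment of the energy at the endpoint $\lambda=b$ --- precisely the case that makes \eqref{eq-en:asy} stronger than \eqref{eq-nb:asy}. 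By Lemma~\ref{lem-VP2} one must bound $\sum_{j=1}^N\langle f_j,Hf_j\rangle$ from below uniformly over orthonormal families of \emph{arbitrary} cardinality $N$, and the IMS error $-h^2\sum_j\sum_k\||\nabla\chi_{k}|f_j\|^2$ can no longer be absorbed into the interior terms, whose coercivity constant $(b-\lambda)h$ is now zero. Your claim that the borderline case is ``harmless since $[\lambda h-e_j]_+$ vanishes on the interior spectrum'' ignores that the localization potential shifts the effective threshold to $bh+Ch^{5/4}$, above the bulk (Landau-level) spectrum near $bh$, which carries $O(h^{-1})$ states; counting-times-error bookkeeping then yields an error $O(h^{-1})\cdot O(h^{5/4})=O(h^{1/4})\gg h^{1/2}$. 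This is exactly where the paper invokes machinery you never mention: the frame-operator observation $\gamma=\chi\bigl(\sum_j|f_j\rangle\langle f_j|\bigr)\chi\leq 1$ (Lemma~\ref{lem-VP1}) turns the error into the trace of a negative part, which is then controlled by a \emph{magnetic Lieb--Thirring inequality} (quoted from \cite[Section~5.1]{FK}), exploiting that the localization potential is supported in a layer of area $O(\tau(h))$ and yielding the $O(h/\tau(h))=O(h^{5/8})$ remainder in \eqref{int-estimate}. Even the finiteness of $E(bh)$ requires this inequality. As written, your argument covers $\lambda<b$ but not $\lambda=b$.

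A second, more repairable, gap is the trial-state direction. Imposing Dirichlet conditions on cell interfaces does raise eigenvalues, but then ``the same fiber computation'' does not apply inside a cell: the fiber functions $e^{i\xi s\sqrt{B/h}}\,u_1\bigl(\sqrt{B/h}\,t;\xi\bigr)$ do not vanish on the Dirichlet walls, and extension by zero gives an inequality in the \emph{wrong} direction (it bounds the cell count from above by the half-plane count, not from below). To get the matching main term one must build explicit, tangentially cut-off coherent states in each cell and estimate the cutoff errors --- which is what \cite{Fr} actually does, and what the paper does globally through the density matrix $\gamma$ of \eqref{den-ub} combined with Lemma~\ref{lem-VP1}, leading to \eqref{eq-nb:asy;conc}. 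For the energy at $\lambda=b$ this explicit construction, not bracketing, is the route compatible with Lemma~\ref{lem-VP1}, since that lemma requires exhibiting a concrete operator $0\leq\gamma\leq1$ with $\tr(H\gamma)$ close to the main term.
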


While proving Theorem~\ref{thm:FK;Fr}, we only give the new
ingredients and constructions required to adapt the proof given in
\cite{FK}. We will  refer to \cite{FK} for the detailed
calculations.

\subsection{Proof of Theorem~\ref{thm:FK;Fr}}

Notice that \eqref{eq-nb:asy} is a consequence of \eqref{eq-en:asy}.
Actually, the term on the right side of \eqref{eq-nb:asy} is the
derivative with respect to $\lambda h$ of that on the right side of
\eqref{eq-en:asy}. On the other hand, using the variational
principle of Lemma~\ref{lem-VP2}, the number $N(\lambda h)$ can be
seen as the derivative of the energy $E(\lambda h)$ with respect to
$\lambda h$.

Therefore, to prove Theorem~\ref{thm:FK;Fr}, it is sufficient to establish the asymptotic
formula in \eqref{eq-en:asy}.

Using a magnetic Lieb-Thirring inequality and a decomposition of the operator by a partition of unity, it is possible to prove
that the energy $E(\lambda h)$ is finite for all $\lambda\leq b$. Details regarding this proof
are given in \cite[Section~5.1, pp. 242-243]{FK}.

We start by proving the asymptotic lower bound:

\begin{equation}\label{eq-en:asy-lb}
E(\lambda h)\geq-\frac{h^{1/2}}{2\pi}
\int_{\partial\Omega}\int_{-\infty}^\infty \Bb(x)^{3/2}\left[\frac{\lambda}{\Bb(x)}-\mu_1(\xi)\right]_+\,d\xi\,ds(x)+o(h^{1/2})\,.
\end{equation}
Let $H=P_{h,\Omega}-\lambda h{\rm Id}$ and notice that the energy $E(\lambda h)$ can be expressed in the more useful
form,
$$E(\lambda h)=\tr\left( H \mathbf 1_{(-\infty,0)}(H)\right)\,.$$
Let $\{f_1,f_2,\ldots,f_N\}$  be any $L^2$ orthonormal set in
$D(H)$. We will give a uniform lower bound to
$$\sum_{j=1}^N\langle f_j\,,\,Hf_j\rangle\,.$$
Using Lemma~\ref{lem-VP2}, this will imply a lower bound to ${\rm
tr}
\left( H \mathbf 1_{(-\infty,0)}(H)\right)$.%

Consider a partition of unity of $\R$,
\begin{equation}\label{partition-R}
\psi_1^2+\psi_2^2=1,\quad\supp\,\psi_1\subset]-\infty,1[,\quad
\supp\,\psi_2\subset[\frac12,\infty[\,,\end{equation}
such that,
\begin{equation}\label{partition-R'}
|\nabla\psi_1|^2+|\nabla\psi_2|^2\leq C,\end{equation}
and $C>0$ is a universal constant.

Let
\begin{equation}\label{eq-tau}
\tau(h)=\frac12\min({|\tan\alpha|},1)\,h^{3/8}\,,\end{equation}
where $\alpha=\min\{\alpha_{\mathsf s_k}~:~k=1,2,\cdots, m\}$.

Using
the partition of unity in (\ref{partition-R}), we put
\begin{equation}\label{eq-pu:asy}
t(x)={\rm dist}(x,\partial\Omega)\,,\quad \psi_{1,h}(x)=\psi_1\left(\frac{t(x)}{\tau(h)}\right),\quad
\psi_{2,h}(x)=\psi_2\left(\frac{t(x)}{\tau(h)}\right),
\quad\forall~x\in\overline\Omega.\end{equation}
We introduce the potential
\begin{equation}\label{potential-2}
V_{h}=h^2\left(|\nabla\psi_{1,h}|^2+|\nabla\psi_{2,h}|^2\right)\,.
\end{equation}
It is possible to prove that (see \cite[Proof of (5.10), pp. 243]{FK}):
\begin{align}\label{int-estimate}
\sum_{j=1}^N\langle f_j\,,\,Hf_j\rangle\geq \sum_{j=1}^N\langle
f_j\,,\,\psi_{1,h}
(H-V_h)\psi_{1,h}f_j\rangle
-C\frac{h}{\tau(h)}\left(1+\frac{h}{\tau(h)^2}\right)\,.
\end{align}
With our choice of $\tau(h)$, the remainder terms in \eqref{int-estimate} are of the order of $h^{5/8}=o(h^{1/2})$.
Thus, we have,
\begin{align}\label{int-estimate'}
\sum_{j=1}^N\langle f_j\,,\,Hf_j\rangle\geq \sum_{j=1}^N\langle
f_j\,,\,\psi_{1,h}
(H-V_h)\psi_{1,h}f_j\rangle
+o(h^{1/2})\,.
\end{align}
Next, we estimate from below the term on the right side of \eqref{int-estimate'}. Recall the vertices $\mathsf s_k$, $k=1,2,\cdots,m$,
 of the domain $\Omega$. Recall also that the boundary of the domain $\Omega$ consists of smooth curves $(\Gamma_k)$.
 For each $k$, define,
\begin{equation}\label{eq-nb:asy;gam}
\Gamma_k(h)=\{x\in\overline\Omega~:~{\rm dist}(x,\Gamma_k)\leq \tau(h)\quad{\rm and}~{\rm dist}(x,s_k)\geq \frac12h^{3/8}\}\,,
\end{equation}
 where $\tau(h)$ is defined in \eqref{eq-tau}.
  Consider a partition of unity of $\R^2$\,:
 $$\sum_{k=1}^m g_k^2+\sum_{k=1}^m h_k^2=1\,,\quad \sum_{k=1}^m\left(|\nabla g_k|^2+|\nabla h_k|^2\right)\leq \frac{C}{h^{3/4}}\,,$$
where  ${\rm supp}\,g_k\subset B(\mathsf s_k,h^{3/8})$, ${\rm supp}\,h_k\subset \Gamma_k(h)$ for all $k$, and
$C$ is a universal constant.

Then, we have the decomposition formula,
\begin{multline}\label{eq-nb-lb:asy}
\sum_{j=1}^N\langle
f_j\,,\,\psi_{1,h}
(H-V_h)\psi_{1,h}f_j\rangle=\sum_{j=1}^N\sum_{k=1}^m\langle
f_j\,,\,g_k\psi_{1,h}
(H-V_{1,h})g_k\psi_{1,h}f_j\rangle\\+
\sum_{j=1}^N\sum_{k=1}^m\langle
f_j\,,\,h_k\psi_{1,h}
(H-V_{1,h})h_k\psi_{1,h}f_j\rangle\,,
\end{multline}
where
$$V_{1,h}=V_h+h^2\sum_{k=1}^m|\nabla g_k|^2+h^2\sum_{k=1}^m|\nabla h_k|^2\,,$$
and
$$|V_{1,h}|\leq Ch^{5/4}\,.$$
We will show that the first term on the right side of \eqref{eq-nb-lb:asy} is of order $o(h^{1/2})$.
Recall the definition of the energy $\mathcal E_{\rm corn}(\alpha,b,R,\lambda)$ in \eqref{eq-sec:en}. We use \eqref{eq-q(u):bnd} with $u=\psi_{1,h}f_j$ and $\chi_{k,h}=g_k$ to bound the term $\langle
f_j\,,\,g_k\psi_{1,h}
(H-V_{1,h})g_k\psi_{1,h}f_j\rangle$ from below by a quadratic form defined over the sector $\Omega_{\alpha_{\mathsf s_j}}$. Then we
use the variational principle of Lemma~\ref{lem-VP2} and the lower bound of $V_{1,h}$ to get,
$$
\sum_{j=1}^N\langle
f_j\,,\,g_k\psi_{1,h}
(H-V_{1,h})g_k\psi_{1,h}f_j\rangle\geq
-(1-Ch^{1/4})\,\mathcal E_{\rm corn}(\alpha_{\mathsf s_k},\Bb_k,h^{-1/8},\widetilde\lambda)\,.
$$
Here $\widetilde\lambda =\lambda b^{-1}-1+ Ch^{1/4}\leq Ch^{1/4}$. Clearly, $\widetilde\lambda$ can be made smaller than an arbitrary positive number. Thus, we can apply Lemma~\ref{thm-disspec} and use \eqref{eq-sec:en'}. In this way, we get the following lower bound that holds uniformly with respect to $k$,
\begin{equation}\label{eq-nb-er:asy}\sum_{j=1}^N\langle
f_j\,,\,g_k\psi_{1,h}
(H-V_{1,h})g_k\psi_{1,h}f_j\rangle\geq
-Ch^{3/4}=o(h^{1/2}).
\end{equation}

The second term on the right side of \eqref{eq-nb-lb:asy} is bounded as in \cite[Proof of (5.26); pp. 244-248]{FK}.
The following lower bound holds uniformly in $k$, $N$ and the orthonormal  family $\{f_j\}$:
\begin{align}\label{conclusion}
\sum_{j=1}^N\langle
f_j\,,\,h_k\psi_{1,h}
(H-V_{1,h})h_k\psi_{1,h}f_j\rangle
\geq -\frac{h^{1/2}}{2\pi}\int_{\Gamma_k} \int_{\mathbb R}
B(x)^{3/2}\left[\mu_1(\xi)-\frac{\lambda}{B(x)}\right]_- \, d  \xi d
x-Ch^{5/8}\,.\end{align}
Substitution of \eqref{eq-nb-er:asy} and \eqref{conclusion} into \eqref{eq-nb-lb:asy} establishes the lower bound \eqref{eq-en:asy-lb}.

\subsubsection*{Upper bound}\label{sec-thm1-ub}

The upper bound will be  obtained by constructing a specific density matrix $\gamma$ and computing the energy of
${\rm tr}\Big((P_{h,\Omega}-\lambda{\rm Id})\gamma\Big)$. The calculations follow closely those in \cite[Section~5.3, pp. 248]{FK}.

Recall the sets $\Gamma_k(h)$ introduced in \eqref{eq-nb:asy;gam}.  Let $\tau(h)$ be as defined in \eqref{eq-tau}. For each $k$, we cover $\Gamma_k(h)$ by disjoint squares,
$$K_{j,k}=\{\,x\in\R^2~:~|x-z_{j,k}|< \tau\,\}\,,\quad 1\leq j\leq N_k\,,$$
where the points $(z_{j,k})$ are on $\Gamma_k$ and equally spaced. In each $K_{j,k}$, it is possible to apply a transformation $\Phi_{j,k}(x)=(s,t)$ that straightens $\partial\Omega$ ($t=0$ defines the boundary of $\Omega$ and $s$ measures the arc-length along $\partial\Omega$).

Recall the eigenvalues $(\mu_p(\xi)$ of the harmonic oscillator in \eqref{L-dop}. We introduce an orthonorml basis $(u_p)$ of $L^2(\R_+)$\,:
\begin{equation}\label{eq-ef-ho}
\left\{\begin{array}{l}
\Big(-\partial_t^2+(t-\xi)^2\Big)u_p\left( t;
\xi\right)=\mu_p(\xi)\,u_p\left( t;
\xi\right)\,,\quad t>0\,,\\
u_p'\left( 0;
\xi\right)=0\,,\\
\displaystyle\int_0^\infty|u_p\left( t;
\xi\right)|^2\,dt=1\,.
\end{array}\right.
\end{equation}

Let  $\chi \in
C^{\infty}(\R^2)$ be  positive, smooth, have values in $[0,1]$, supported in $B(0,2)$ and equal to $1$ on $B(0,1)$.
Define $\chi_{j,k}(x)$  to be
\begin{align*}
\chi_{j,k}(x):=\chi\Big(\frac{x-z_{j,k}}{\tau(h)}\Big)\,.
\end{align*}
We denote by $B_{j,k}=\Bb(z_{j,k})$. For each $(j,k)$, there exists
a function $\phi_{j,k}$ such that
$$\Ab-\nabla \phi_{j,k}=\Big(-{B_{j,k}}\,x_2,0\Big)+\mathcal O\Big(|x-z_{j,k}|\Big)\quad {\rm in~} K_{j,k}\,.$$
Define
\begin{align}
\widetilde{f}_{j,p,k}( (s,t); h,, \xi) :=
\left(\frac{B_{j,k}}{h}\right)^{1/4} e^{-i\xi s
\sqrt{B_{j,k}/h}}\,u_p\left(  \sqrt{\frac{B_{j,k}}{h}}\,t;
\xi\right) \,.
\end{align}
We get---by the coordinate transformation $\Phi_{j,k}$---the following
function in $\Omega$,
\begin{align}
f_{j,p,k}(x; h, \xi) = \widetilde{f}_{j,p,k}( \Phi_{j,k}(x); h, \xi)\,\chi_{j,k}(x)\psi_{1,h}(x)\,.
\end{align}
Next, let $K>0$ and define the function,
$$f_{p}(x\,;\xi)=\sum_{k=1}^m\sum_{j=1}^{N_k}\,M_p(h, \xi,j,k\,;K)\,e^{i\phi_{j,k}(x)/h}\chi_{j,k}(x)\,\psi_{1,h}(x)\,\widetilde{f}_{j,p,k}( \Phi_{j,k}(x); h, \xi)\,,$$
where $M_p(h, \xi,j,k\,;K)=M^K(h,\xi,j,k)\mathbf 1_{\{p=1\}}$ and $M^K(h,\xi,j,k)$  is the characteristic function of the set:
\begin{align*}
\{ (\xi,j,k) \,:\, \frac{\lambda}{B_{j,k}} - \mu_1(\xi) \geq 0,~ |\xi| \leq K \}\,.
\end{align*}
Let $\Pi_p^{\rm bnd}(h,\xi)$ be the operator with integral kernel  in $L^2(\Omega)$ whose integral kernel is defined by,
\begin{align}
\Pi_p^{\rm bnd}(h,\xi)(x,x') =  f_p(x\,;  \xi)\,
\overline{f_p(x'\,;  \xi)}\,.
\end{align}
Define the density matrix,
\begin{align}\label{den-ub}
\gamma = \frac{1}{2 \pi \sqrt{h/B_{\sigma}}}\sum_{p=1}^{\infty} \Pi_p^{\rm
bnd}(h,\xi)\,.
\end{align}
Clearly $0 \leq \gamma$ as an operator on $L^2(\Omega)$. Furthermore, there exists  a constant $C>0$ such that,
\begin{align}
\label{eq:BdI}
\gamma \leq (1 + C\tau(h))\,,
\end{align}
in the quadratic form sense. Details concerning the derivation of \eqref{eq:BdI} are give in \cite[Proof of (5.34)]{FK}. Moreover, (see \cite[(5.37)]{FK}),
\begin{align}\label{eq-nb:asy;conc}
\tr[ (P_{h,\Omega} - \lambda h) \gamma] &
\leq - \frac{h^{1/2}}{2 \pi} \int_{-K}^K \int_{\partial\Omega} \Bb(x)^{3/2} \big[ \frac{\lambda}{\Bb(x)} - \mu_1(\xi) \big]_{+} ds(x) d\xi  + C K h^{3/4}\,.
\end{align}
Let us mention that, while estimating the term on the left side of \eqref{eq-nb:asy;conc}, the discrete sum over $j$ and $k$ becomes a Riemann sum as $h\to0$, thereby resulting in an integral on the
right side of \eqref{eq-nb:asy;conc}.
Since $K$ can be chosen arbitrarily large, \eqref{eq-nb:asy;conc} implies the asymptotic upper bound,
$$\tr[ (P_{h,\Omega} - \lambda h) \gamma]
\leq - \frac{h^{1/2}}{2 \pi} \int_{-\infty}^\infty \int_{\partial\Omega} \Bb(x)^{3/2} \big[ \frac{\lambda}{\Bb(x)} - \mu_1(\xi) \big]_{+} ds(x) d\xi  + o(h^{1/2})\,.
$$

\section*{Acknowledgments}
The authors are supported by a grant from Lebanese University.

\end{document}